\def\black@#1{%
    \noalign{%
        \ifdim#1>\displaywidth
            \dimen@\prevdepth
            \nointerlineskip
            \vskip-\ht\strutbox@
            \vskip-\dp\strutbox@
            \vbox{\noindent\hbox to \displaywidth{\hbox to#1{\strut@\hfill}}}%
            \prevdepth\dimen@
        \fi
    }%
}
\newtheorem{thm}{Theorem}[section]
\newtheorem{lem}[thm]{Lemma}
\newtheorem{cor}[thm]{Corollary}
\theoremstyle{definition}
\theoremstyle{remark}
\newtheorem{rmk}[thm]{Remark}
\newcommand{\thmref}[1]{Theorem~\ref{#1}}
\newcommand{\corref}[1]{Corollary~\ref{#1}}
\newcommand{\secref}[1]{\S\ref{#1}}
\newcommand{\lemref}[1]{Lemma~\ref{#1}}
\newcommand*{\dis}{\displaystyle}
\newcommand*{\q}{\quad}
\newcommand*{\qq}{\qquad}
\newcommand*{\tx}[1]{\text{#1}}
\newcommand*{\ep}{\epsilon}
\newcommand*{\suchthat}{\, \middle| \,}
\newcommand*{\Ra}{\Rightarrow}
\newcommand*{\ftil}{\widetilde{f}}
\newcommand*{\fabs}{f_{abs}}
\newcommand*{\Pminus}{P_{-}}
\newcommand*{\Pminusbar}{\myoverline{-3}{0}{P}_{-}}
\newcommand*{\myoverline}[3]{\mkern -#1mu\overline{\mkern#1mu#3\mkern#2mu}\mkern -#2mu}	
\newcommand*{\vboldbar}{\myoverline{0}{0}{\vbold}}
\newcommand*{\Zbar}{\myoverline{-3}{0}{\Z}}
\newcommand*{\Fbar}{\myoverline{-3}{0}{F} }
\newcommand*{\Fzpbar}{\myoverline{-3}{-5}{F_\zp}}
\newcommand*{\Fzpepbar}{\myoverline{-3}{-5}{F_\zp^\ep}}
\newcommand*{\Bbar}{\myoverline{-3}{0}{B} }
\newcommand*{\Omegabar}{\myoverline{0}{0}{\Omega} }
\newcommand*{\half}{\frac{1}{2}}
\newcommand*{\Rsp}{\mathbb{R}}
\newcommand*{\Csp}{\mathbb{C}}
\newcommand*{\Lone}{L^1}
\newcommand*{\Ltwo}{L^2}
\newcommand*{\Linfty}{L^{\infty}}
\newcommand*{\Hhalf}{\dot{H}^\half}
\newcommand*{\al}{\alpha}
\newcommand*{\ap}{{\alpha'}}
\newcommand*{\apone}{\alpha_1'}
\newcommand*{\aptwo}{\alpha_2'}
\newcommand*{\be}{\beta}
\newcommand*{\bp}{{\beta'}}
\newcommand*{\xp}{{x'}}
\newcommand*{\yp}{{y'}}
\newcommand*{\zp}{{z'}}
\newcommand*{\diff}{\mathop{}\! d}
\newcommand*{\compose}[1]{\circ{#1}}
\newcommand*{\conv}{*}
\newcommand*{\Imag}{\tx{Im}}
\newcommand*{\grad}{\nabla}
\newcommand*{\Dt}{D_t}
\newcommand*{\pt}{\partial_t}
\newcommand*{\px}{\partial_x}
\newcommand*{\py}{\partial_y}
\newcommand*{\pz}{\partial_z}
\newcommand*{\pxp}{\partial_\xp}
\newcommand*{\pyp}{\partial_\yp}
\newcommand*{\pzp}{\partial_\zp}
\newcommand*{\pap}{\partial_\ap}
\newcommand*{\Ecal}{\mathcal{E}}
\newcommand*{\Ecalone}{\mathcal{E}_1}
\newcommand*{\vbold}{\mathbf{v}}
\newcommand*{\Pfrak}{\mathfrak{P}}
\newcommand*{\Pfrakep}{\Pfrak^\ep}
\newcommand*{\Aone}{A_1}
\newcommand*{\bvar}{b}
\newcommand*{\bap}{\bvar_\ap}
\newcommand*{\bvarap}{\bap}
\newcommand*{\h}{h}
\newcommand*{\hep}{\h^\ep}
\newcommand*{\hvart}{\h_t}
\newcommand*{\hal}{\h_\al}
\newcommand*{\halep}{\hal^\ep}
\newcommand*{\hinv}{\h^{-1}}
\newcommand*{\thvar}{\theta}
\newcommand*{\g}{g}
\newcommand*{\gep}{\g^\ep}
\newcommand*{\G}{G}
\newcommand*{\Gep}{\G^\ep}
\newcommand*{\F}{F}
\newcommand*{\Fep}{\F^\ep}
\newcommand*{\Fepbar}{\Fbar^\ep}
\newcommand*{\Fzp}{\F_{\zp}}
\newcommand*{\Fzpep}{\Fzp^\ep}
\newcommand*{\Ft}{\F_t}
\newcommand*{\Ftep}{\Ft^\ep}
\newcommand*{\Psiep}{\Psi^\ep}
\newcommand*{\Psizp}{\Psi_{\zp}}
\newcommand*{\Psizpep}{\Psi_{\zp}^\ep}
\newcommand*{\onePsizp}{\frac{1}{\Psizp}}
\newcommand*{\onePsizpep}{\frac{1}{\Psizpep}}
\newcommand*{\Psit}{\Psi_{t}}
\newcommand*{\Psitep}{\Psit^\ep}
\newcommand*{\Psiinv}{\Psi^{-1}}
\newcommand*{\z}{z}
\newcommand*{\zep}{\z^\ep}
\newcommand*{\zal}{\z_\al}
\newcommand*{\zalep}{\zal^\ep}
\newcommand*{\zt}{\z_t}
\newcommand*{\ztep}{\zt^\ep}
\newcommand*{\ztal}{\z_{t\al}}
\newcommand*{\ztalep}{\ztal^\ep}
\newcommand*{\ztt}{\z_{tt}}
\newcommand*{\zttep}{\ztt^\ep}
\newcommand*{\zttal}{\z_{tt\al}}
\newcommand*{\Z}{Z}
\newcommand*{\Zep}{\Z^\ep}
\newcommand*{\Zap}{\Z_{,\ap}}
\newcommand*{\oneZap}{\frac{1}{\Zap}}
\newcommand*{\Zapep}{\Zap^\ep}
\newcommand*{\oneZapep}{\frac{1}{\Zapep}}
\newcommand*{\Zapabs}{\abs{\Zap}}
\newcommand*{\Zt}{\Z_t}
\newcommand*{\Ztep}{\Zt^\ep}
\newcommand*{\Ztbar}{\Zbar_t}
\newcommand*{\Ztbarep}{\Zbar_t^\ep}
\newcommand*{\Ztap}{\Z_{t,\ap}}
\newcommand*{\Ztapep}{\Ztap^\ep}
\newcommand*{\Ztapbar}{\Zbar_{t,\ap}}
\newcommand*{\Ztapepbar}{\Ztapbar^\ep}
\newcommand*{\Ztt}{\Z_{tt}}
\newcommand*{\Zttep}{\Zep_{tt}}
\newcommand*{\Zttbar}{\Zbar_{tt}}
\newcommand*{\Zttbarep}{\Zttbar^\ep}
\newcommand*{\Zttap}{\Z_{tt,\ap}}
\newcommand*{\nobrac}[1]{ #1 }
\DeclarePairedDelimiter{\oldbrac}{\lparen}{\rparen}			
\NewDocumentCommand{\brac}{ s o m }{						
	\IfBooleanT{#1}{
  		\IfValueT{#2}{\oldbrac[#2]{#3}}
		\IfValueF{#2}{\oldbrac{#3}} 
	}
	\IfBooleanF{#1}{
  		\IfValueT{#2}{\PackageError{mypackage}{Incorrect use of brac. Insert star}{}}
		\IfValueF{#2}{\oldbrac*{#3}} 
	}		
}
\DeclarePairedDelimiter\oldcbrac{\lbrace}{\rbrace}				
\NewDocumentCommand{\cbrac}{ s o m }{					
	\IfBooleanT{#1}{
  		\IfValueT{#2}{\oldcbrac[#2]{#3}}
		\IfValueF{#2}{\oldcbrac{#3}} 
	}
	\IfBooleanF{#1}{
  		\IfValueT{#2}{\PackageError{mypackage}{Incorrect use of cbrac. Insert star}{}}
		\IfValueF{#2}{\oldcbrac*{#3}} 
	}		
}
\DeclarePairedDelimiter\oldsqbrac{\lbrack}{\rbrack}				
\NewDocumentCommand{\sqbrac}{ s o m }{					
	\IfBooleanT{#1}{
  		\IfValueT{#2}{\oldsqbrac[#2]{#3}}
		\IfValueF{#2}{\oldsqbrac{#3}} 
	}
	\IfBooleanF{#1}{
  		\IfValueT{#2}{\PackageError{mypackage}{Incorrect use of sqbrac. Insert star}{}}
		\IfValueF{#2}{\oldsqbrac*{#3}} 
	}		
}
\DeclarePairedDelimiter{\oldabs}{\lvert}{\rvert}
\NewDocumentCommand{\abs}{ s o m }{						
	\IfBooleanT{#1}{
  		\IfValueT{#2}{\oldabs[#2]{#3}}
		\IfValueF{#2}{\oldabs{#3}} 
	}
	\IfBooleanF{#1}{
  		\IfValueT{#2}{\PackageError{mypackage}{Incorrect use of abs. Insert star}{}}
		\IfValueF{#2}{\oldabs*{#3}} 
	}		
}
\DeclarePairedDelimiterX{\oldnorm}[1]{\lVert}{\rVert}{#1}
\NewDocumentCommand{\norm}{ s o o m }{					
	\IfValueT{#2} {
		\IfBooleanT{#1}{
  			\IfValueT{#3}{\oldnorm[#2]{#4}_{#3}}
			\IfValueF{#3}{\oldnorm{#4}_{#2}} 
		}
		\IfBooleanF{#1}{
  			\IfValueT{#3}{\PackageError{mypackage}{Incorrect use of norm. Insert star}{}}
			\IfValueF{#3}{\oldnorm*{#4}_{#2}} 
		}
	}
	\IfValueF{#2} {
		\IfBooleanT{#1}{\oldnorm{#4}}	
		\IfBooleanF{#1}{\oldnorm*{#4}}		
	}	
}
\begin{document}

\title[water waves]{Rigidity of singularities of 2D gravity water waves}
\author{Siddhant Agrawal}
\address{Department of Mathematics, University of Michigan, Ann Arbor, MI 48104}
\email{sidagr@umich.edu}

\begin{abstract}
We consider the Cauchy problem for the 2D gravity water wave equation. Recently Wu \cite{Wu15, Wu18}  proved the local well-posedness of the equation in a regime which allows interfaces with angled crests as initial data. In this work we study properties of these singular solutions and prove that the singularities of these solutions are "rigid". More precisely we prove that an initial interface with angled crests remains angled crested, the Euler equation holds point-wise even on the boundary, the particle at the tip stays at the tip, the acceleration at the tip is the one due to gravity and the angle of the crest does not change  nor does it tilt. We also show that the existence result of Wu \cite{Wu15} applies not only to interfaces with angled crests, but also allows certain types of cusps. 
\end{abstract}

\maketitle


\section{Introduction}

We are concerned with the motion of a fluid in dimension two with a free boundary. In this work we will identify 2D vectors with complex numbers. The fluid region $\Omega(t) \subset \Csp$ is assumed to be a simply connected open set with boundary $\Sigma(t)$, which is assumed to be a Jordan curve. The bottom is assumed to be at infinite depth and the interface $\Sigma(t)$ is assumed to tend to the real line at infinity. We do not assume that the interface $\Sigma(t)$ is a graph. The fluid region $\Omega(t)$ and the air is separated by the interface $\Sigma(t)$, with the fluid being below the air region.  The air and the fluid are assumed to have constant densities of 0 and 1 respectively. The fluid is also assumed to be inviscid, incompressible and irrotational.  The gravitational field is assumed to be a constant vector $-i$ pointing in the downward direction and surface tension is neglected. The motion of the fluid is then governed by the Euler equation
\begin{align}\label{eq:Euler}
\begin{cases}
\ \mathbf{v_t + (v.\nabla)v} = -i -\nabla P  \qq\text{ on } \Omega (t) 	\\
\  \tx{div } \mathbf{ v} = 0, \quad \tx{curl } \mathbf{ v }=0 \qq\enspace\text{ on } \Omega(t) \\
\   P = 0 \qq\qq\qq\qq\qq\text{ on } \Sigma (t) \\
\   (1,\mathbf{v}) \tx{ is tangent to the free surface } (t, \Sigma(t))
\end{cases}
\end{align}
along with the decay conditions $\vbold \to 0 $, $\grad P \to -i$ as $\abs{(x,y)} \to \infty$. The last boundary condition i.e. $(1,\mathbf{v}) \tx{ is tangent to the free surface } (t, \Sigma(t))$, is equivalent to the condition that particles on the boundary stay on the boundary.

The study of special solutions of water waves has a long history. More than a century ago, Stokes \cite{St80} formally constructed traveling wave solutions with sharp crests of angle $2\pi/3$. The existence of these singular waves was proven by Toland \cite{To78} and Amick, Fraenkel, and Toland \cite{AmFrTo82} proved that these singular solutions do indeed have sharp crests of angle $2\pi/3$. See also the works in  \cite{PlTo04, Co12}. In the case of zero gravity, Wu \cite{Wu12} proved the existence of self-similar solutions which have sharp crests of angle $\nu\pi$ where $0<\nu<\half$.

The earliest results on local well-posedness in Sobolev spaces for the Cauchy problem are for small data in 2D and were obtained by Nalimov \cite{Na74}, Yoshihara \cite{Yo82,Yo83} and Craig \cite{Cr85}. Wu \cite{Wu97,Wu99} proved the local well-posedness for arbitrary data in Sobolev spaces, in the infinite depth case. Later this result was extended to the case of bottom with finite depth, non-zero surface tension, non-zero vorticity and in lowering the Sobolev regularity in  \cite{ChLi00, La05, Li05, AmMa05, CoSh07, ShZe08, ZhZh08, MiZh09, AmMa09, ABZ14a, HuIfTa16, ABZ14b, Poy16}. 

Kinsey and Wu \cite{KiWu18} proved an a priori estimate for an energy $\Ecalone(t)$ which allows both smooth interfaces and interfaces with angled crests. Building upon this work, Wu \cite{Wu15} obtained local existence of solutions for initial data with $\Ecalone(0) < \infty$. Recently Wu \cite{Wu18} proved the uniqueness of these solutions.

In this work we study the properties of the solutions with singular initial data constructed in \cite{Wu15}. In \cite{KiWu18} it is observed that the angle of the angled crests should not change with time, and a heuristic argument is given to support this claim. In this work, we rigorously prove this claim and further study the nature of the solution around the singularities. If the solution constructed in \cite{Wu15} exists in the time interval $[0,T]$, we show that during this time interval the following is true:
\begin{enumerate}
\item An interface with angled crests at $t=0$ remains angled crested for $t>0$.
\item The particle at the tip stays at the tip.
\item The quantities $ \grad \vbold(\cdot,t)$ and $\grad P(\cdot,t)$ extend continuously to $\Omegabar(t)$ and the Euler equation holds on $\Omegabar(t)$. \footnote{In \cite{Wu15} it was shown that $\vbold(\cdot,t)$ and $P(\cdot,t)$ extends continuously to $\Omegabar(t)$ and the Euler equation holds in the strong sense in the interior of $\Omega(t)$.}
\item  $\grad \vbold(\cdot,t)$ and $\grad P(\cdot,t)$ vanish at the singularities. Hence the acceleration at the tip is $-i$.
\item The angle of the crest does not change, nor does it tilt i.e. the left and right unit tangent vectors at the crest do not change with time.
\end{enumerate}
See the section on main results \secref{sec:results} for a more precise formulation of the results. We also show in \secref{sec:exampleE} that the existence result in \cite{Wu15} applies not only to interfaces with angled crests, but also allows certain types of cusps.

The paper is organized as follows: In  \secref{sec:sing} we introduce the notation and explain the existence result of Wu \cite{Wu15}. We also give a heuristic explanation for our results. In \secref{sec:results} we state our main results. In \secref{sec:proof} we prove our results. In \secref{sec:exampleE} we give some examples of initial data for which our results apply.

\bigskip

\noindent \textbf{Acknowledgment}: This work is part of my Ph.D. thesis and I am very grateful to my advisor Prof. Sijue Wu for helpful discussions. The author was supported in part by NSF grants DMS-1101434, DMS-1361791.

\section{Notation and preliminaries}\label{sec:sing}

In this section we first introduce the notation, then briefly recall the existence result in \cite{Wu15} and finally give a heuristic explanation for our results. We will try to be as consistent as possible with the notation used in \cite{Wu15}. The definitions and notation introduced here are for smooth solutions to the Euler equation. For singular solutions, we will clarify which definitions make sense. The Fourier transform is defined as
\[
\hat{f}(\xi) = \frac{1}{\sqrt{2\pi}}\int e^{-ix\xi}f(x) \diff x
\]
The Sobolev spaces $H^s(\Rsp)$ for $s\geq 0$  is the space of functions with  $\norm[H^s]{f} = \norm*[2]{(1+\abs{\xi}^2)^{\frac{s}{2}}\hat{f}(\xi)} < \infty$. The homogenous Sobolev space $\Hhalf(\Rsp)$ is the space of functions modulo constants with  $\norm[\Hhalf]{f} = \norm*[2]{\abs{\xi}^\half \hat{f}(\xi)} < \infty$. Define the Poisson kernel by
\begin{align*}
K_y(x) = \frac{-y}{\pi(x^2+y^2)} \qq   y<0
\end{align*}

From now on compositions of functions will always be in the spatial variables. We write $f = f(\cdot,t), g = g(\cdot,t), f \compose g(\cdot,t) :=  f(g(\cdot,t),t)$. We will denote the spacial coordinates in $\Omega(t) $ with $z = x+iy$, whereas $\zp = \xp + i\yp$ will denote the coordinates in the lower half plane $\Pminus = \cbrac{(x,y) \in \Rsp^2 \suchthat y<0}$. As we will frequently work with holomorphic functions, we will use the holomorphic derivatives $\pz = \half(\px-i\py)$ and $\pzp = \half(\pxp-i\pyp)$. Let the interface $\Sigma(t) : \z = \z(\al, t) \in \Csp$ be given by a Lagrangian parametrization with parameter $\al$ satisfying $\z_{\al}(\al,t) \neq 0 $ for all  $\al \in \Rsp$. Hence $\zt(\al,t) = \mathbf{v}(\z(\al,t),t)$ is the velocity of the fluid on the interface and $\ztt(\al,t) = (\mathbf{v_t + (v.\nabla)v})(\z(\al,t),t)$ is the acceleration.

Let $\Psi(.,t): \Pminus \to  \Omega(t)$ be Riemann maps satisfying $\lim_{\z\to \infty} \Psi_\z(\z,t) =1$  and $\lim_{\z\to \infty} \Psi_t(\z,t) =0$.  With this, the only ambiguity left in the definition of $\Psi$ is that of the choice of translation of the Riemann map at $t=0$, which does not play any role in the analysis. Let $\Phi(\cdot,t):\Omega(t) \to \Pminus $ be the inverse of the map $\Psi(\cdot,t)$ and define
\begin{align}\label{def:h}
\h(\al,t) = \Phi(\z(\al,t),t)
\end{align}
hence $\h(\cdot,t):\Rsp \to \Rsp$ is a homeomorphism. The map $h(\cdot,t)$ connects the Lagrangian and conformal parameterizations. As we use both Lagrangian and conformal parameterizations, we will denote the Lagrangian parameter by $\al$ and the conformal parameter by $\ap$. Let $\hinv(\cdot,t)$ be the inverse of $h(\cdot,t)$ i.e.
\[
\h(\hinv(\ap,t),t) = \ap
\]
 From now on, we will fix our Lagrangian parametrization at $t=0$ by imposing
\begin{align*}
h(\al,0)= \al \quad  \tx { for all } \al \in \Rsp
\end{align*}
Hence the Lagrangian parametrization is the same as the conformal parametrization at $t=0$. Define the variables
\[
\begin{array}{l l l}
 \Z(\ap,t) = \z\compose \hinv (\ap,t)  & \Zap(\ap,t) = \pap \Z(\ap,t) &  \quad  \tx{ Hence } \quad  \brac*[]{\dfrac{\zal}{\hal}} \compose \hinv = \Zap \\
  \Zt(\ap,t) = \zt\compose \hinv (\ap,t)  & \Ztap(\ap,t) = \pap \Zt(\ap,t) &  \quad   \tx{ Hence } \quad  \brac*[]{\dfrac{\ztal}{\hal}} \compose \hinv = \Ztap \\
 \Ztt(\ap,t) = \ztt\compose \hinv (\ap,t)  & \Zttap(\ap,t) = \pap \Ztt(\ap,t) &  \quad   \tx{ Hence } \quad  \brac*[]{\dfrac{\zttal}{\hal}} \compose \hinv = \Zttap \\
\end{array}
\]
Hence $\Z(\ap,t), \Zt(\ap,t)$ and $\Ztt(\ap,t)$  are the parametrizations of the boundary, the velocity and the acceleration in conformal coordinates and in particular $\Z(\cdot,t)$ is the boundary value of the Riemann map $\Psi(\cdot,t)$. Note that as $\Z(\ap,t) = \z(\hinv(\ap,t),t)$ we see that  $\pt \Z \neq \Zt$. Similarly $\pt \Zt \neq \Ztt$.  In conformal coordinates, the substitute for the time derivative is the material derivative $\Dt = \pt + \bvar\pap$ where $\bvar = \hvart\compose \hinv$. Then we have $\Dt \Z = \Zt$ and $\Dt \Zt = \Ztt $. If we take complex conjugate of the Euler equation \eqref{eq:Euler} we get
\begin{align}\label{eq:Eulerconj}
\vboldbar_t + (\vbold\cdot\grad)\vboldbar =  -(\px - i\py)P + i \quad \tx{ on } \Omega(t) 
\end{align}
 Define $\F: \Pminusbar \to \Csp $ and $\Pfrak: \Pminusbar \to \Rsp$ as
\begin{align*}
\F = \vboldbar \compose \Psi \qquad \Pfrak = P \compose \Psi
\end{align*}
and observe that $\F$ is a holomorphic function on $\Pminus$. Also note that its boundary value is given by $ \Ztbar(\ap,t) = \F(\ap,t)$ for all $\ap\in \Rsp$.  As $(\vbold \cdot\grad)\vboldbar = \vbold\vboldbar_z $, we see that \eqref{eq:Eulerconj} can be written as
\begin{align*}
(\F\compose \Psiinv)_t + \Fbar\compose\Psiinv (\F\compose\Psiinv)_z = -(\px - i\py)(\Pfrak\compose \Psiinv) + i 
\end{align*} 
Hence by chain rule and changing coordinates we can rewrite the Euler equation as 
\begin{align}\label{eq:EulerRiem}
\begin{cases}
\ \Ft - \Psit \frac{\Fzp}{\Psizp} + \Fbar \frac{\Fzp}{\Psizp} = -\frac{1}{\Psizp}(\pxp - i\pyp)\Pfrak + i \quad\! \tx{ on } \Pminus \\
\ \F(\cdot,t) \tx{ is holomorphic} \qq\qq\qq\qq\qq\q \tx{ on } \Pminus \\
\ \Pfrak = 0 \qq\qq\qq\qq\qq\qq\qq\qq\q\enspace\, \tx{ on } \partial \Pminus \\
\ \tx{Trace of } \onePsizp(\Fbar - \Psit) \tx{ is real valued} \qq\qq\q  \tx{ on } \partial \Pminus
\end{cases}
\end{align}
along with the condition that $\Psi(\cdot,t)$ is conformal and the decay conditions $\F \to 0$, $(\pxp - i\pyp) \Pfrak \to i$, $\Psizp \to 1$ and $ \Psi_t \to 0$ as $\zp \to \infty$.

Note from \eqref{eq:EulerRiem} that the material derivative on $\Pminus$ is given by the operator $\pt + \onePsizp(\Fbar - \Psit)\pzp$. Hence the condition that the particles on the boundary stay on the boundary is equivalent to saying that the trace of $\onePsizp(\Fbar - \Psit)$ is real valued.

It should be noted that the process of obtaining \eqref{eq:EulerRiem} from \eqref{eq:Euler} is reversible so long as the interface $\Sigma(t) = \cbrac{\Z(\ap,t) \suchthat \ap\in \Rsp}$ is Jordan curve. Note that the boundary value of left hand side of the first equation \eqref{eq:EulerRiem} is the conjugate of the acceleration in conformal coordinates and hence
\begin{align*}
\Zttbar(\ap,t) = \brac{\Ft - \Psit \frac{\Fzp}{\Psizp} + \Fbar \frac{\Fzp}{\Psizp}}(\ap,t) \qq \tx{ on } \partial\Pminus
\end{align*}
The boundary value of the conjugate of the gradient of the pressure is given by the following formula from \cite{Wu15}
\begin{align}\label{eq:gradPbdry}
i \frac{\Aone}{\Zap}(\ap,t) = \brac{\frac{1}{\Psizp}(\pxp - i\pyp)\Pfrak}(\ap,t) \qq \tx{ on } \partial\Pminus
\end{align}
where $\Aone(\cdot,t):\Rsp \to \Rsp$ is defined as
\begin{align*}
\Aone(\ap,t) = 1 + \frac{1}{2\pi}\int \frac{\abs{\Zt(\ap,t) - \Zt(\bp,t)}^2}{(\ap-\bp)^2} \diff\bp 
\end{align*}
Hence equation \eqref{eq:EulerRiem} on the boundary is
\begin{align}\label{form:Zttbar}
\Zttbar -i = -i \frac{\Aone}{\Zap}
\end{align}

\medskip
\noindent \textbf{Existence of solutions with singular initial data}
\medskip

From now on we will use the following convention: If $H:\Pminus \to \Csp$ is continuous function which extends continuously to $\Pminusbar$, then we say that $H$ is continuous on $\Pminusbar$ with its boundary value given  by the continuous extension. This convention becomes especially important when there are two or more functions involved. For e.g. if $L,M: \Pminus \to \Csp$ are two continuous functions on $\Pminus$ such that $(LM)(\zp)$ extends continuously to $\Pminusbar$, then we say that $LM$ is a continuous function on $\Pminusbar$ with its boundary values denoted by $(LM)(\ap)$. Note that in such a situation, the boundary values of $L$ and $M$ may not be defined and hence it makes no sense apriori to talk about the equality $(LM)(\ap) = L(\ap)M(\ap)$. 

We now describe the existence result of Wu namely Theorem 3.4 in \cite{Wu15} which applies to both smooth and singular initial data. This existence result was reproved in \cite{Wu18} with a few minor modifications. The solutions constructed in \cite{Wu15,Wu18} solve equation \eqref{eq:EulerRiem} in $\Pminus$ in the strong sense, and give rise to a physical solution to the Euler equation \eqref{eq:Euler} if the conformal maps $\Psi(\cdot,t)$ are invertible. These solutions are constructed via an approximation by smooth solutions in the following energy class 
\begin{align*}
\Ecalone(t) & =   \sup_{\yp<0}\norm[\Ltwo(\Rsp,\diff \xp)]{\Fzp(t)}^2 + \sup_{\yp<0}\norm[\Hhalf(\Rsp,\diff \xp)]{\onePsizp\Fzp(t)}^2 + \sup_{\yp<0}\norm[\Ltwo(\Rsp, \diff \xp)]{\frac{1}{\Psizp}\pzp\brac{\frac{1}{\Psizp}\Fzp}(t)}^2 \\
& \quad  + \sup_{\yp<0}\norm[\Hhalf(\Rsp, \diff \xp)]{\frac{1}{\Psizp^2}\pzp\brac{\frac{1}{\Psizp}\Fzp}(t)}^2 + \sup_{\yp<0}\norm[\Linfty(\Rsp,\diff \xp)]{\frac{1}{\Psizp}(t)}^2  \\
& \quad  + \sup_{\yp<0}\norm[\Ltwo(\Rsp, \diff \xp)]{\partial_\zp \brac{\frac{1}{\Psizp}}(t)}^2 + \sup_{\yp<0}\norm[\Ltwo(\Rsp, \diff \xp)]{\frac{1}{\Psizp}\pzp\brac{\frac{1}{\Psizp}\pzp \brac{\frac{1}{\Psizp}}}(t)}^2
\end{align*}
Observe that if $\F$ and $\onePsizp$ extend smoothly to the boundary, then this energy is equal to the energy
\begin{align*}
\Ecal(t) & =  \norm[2]{\Ztapbar(t)}^2 +  \norm[\Hhalf]{\frac{\Ztapbar}{\Zap}(t)}^2  + \norm[2]{\oneZap\pap\brac{\oneZap\Ztapbar}(t)}^2 + \norm[\Hhalf]{\frac{1}{\Zap^2}\pap\brac{\oneZap\Ztapbar}(t)}^2\\
& \quad  +  \norm[\infty]{\frac{1}{\Zap}(t)}^2 + \norm[2]{\pap\frac{1}{\Zap}(t)}^2  + \norm[2]{\oneZap\pap\brac{\oneZap\pap\frac{1}{\Zap}}(t)}^2 \\
\end{align*}
where all these quantities are defined on the boundary $\partial\Pminus$. It is important to note that the energy $\Ecalone(t)$ allows both smooth and singular domains, including domains with angled crests and cusps. See \secref{sec:exampleE} for more details. In \cite{KiWu18} an apriori energy estimate is proved for the energy $\Ecal(t)$ for smooth enough solutions. 

The initial data $(\Psi,\F,\Pfrak)(0)$ is chosen so that $\Psi(\cdot, 0), \F(\cdot,0) :\Pminus \to \Csp$ are holomorphic with $\Psi(\cdot,0)$ being conformal, the range of $\Psi(\cdot,0)$ namely $\Omega(0) = \cbrac{\Psi(\zp,0) \suchthat  \zp \in \Pminus}$ is a domain with the boundary $\Sigma(0)$ being a Jordan curve, and $\lim_{\zp \to \infty} \Psizp(\zp,0) = 1$. $\Pfrak(\cdot,0):\Pminus \to \Rsp$ is chosen so that it is the unique solution to
\begin{align*}
\Delta \Pfrak = -2\abs{\Fzp}^2 \quad \tx{ on } \Pminus \qq \tx{ and } \Pfrak = 0 \quad \tx{ on } \partial \Pminus 
\end{align*}
with the condition $(\pxp + i\pyp)\Pfrak \to -i$ as ${\zp \to \infty}$.  It is assumed that $\Ecalone(0)<\infty$ along with 
\begin{align*}
c_0 = \sup_{\yp <0}\norm[\Ltwo(\Rsp,\diff \xp)]{\F(\xp + i\yp,0)}  + \sup_{\yp <0}\norm[\Ltwo(\Rsp,\diff \xp)]{\frac{1}{\Psizp(\xp+i\yp,0)} - 1} < \infty
\end{align*}
For such initial data, Wu proved the following existence result.

\begin{thm}[\cite{Wu15,Wu18}]\label{thm:Wuexistence}
Let the initial data $(\F,\Psi,\Pfrak)(0)$ be described as above with $\Ecalone(0)<\infty$. Then there exists a time $T_0 >0$ depending only on $\Ecalone(0)$ such that on $[0,T_0]$ the initial value problem of the gravity water wave equation \eqref{eq:EulerRiem} has a solution $(\F,\Psi,\Pfrak)(t)$ with the following properties:
\begin{enumerate}
\item $\Psi(\cdot,t)$ is conformal on $\Pminus$ for each fixed $t\in[0,T_0]$, $\Psi$ is continuous differentiable on $\Pminus\times[0,T_0]$ and $\Psi$, $ \frac{1}{\Psizp}$ and $\frac{\Psit}{\Psizp}$ are continuous on $\Pminusbar\times [0,T_0]$

\item $\F(\cdot,t)$ is holomorphic on $\Pminus$ for each fixed $t\in[0,T_0]$, $\F$ is continuous on $\Pminusbar\times [0,T_0]$ and $\F$ is continuous differentiable on $\Pminus\times [0,T_0]$

\item $\Pfrak$ is continuous on $\Pminusbar\times [0,T_0]$ and $\Pfrak$ is continuous differentiable on $\Pminus\times [0,T_0]$

\item For all $t \in [0,T_0]$ we have $\Ecalone(t)<\infty$ and 
\begin{align}\label{lowerreg}
\sup_{\yp <0}\norm[\Ltwo(\Rsp,\diff \xp)]{\F(\xp + i\yp,t)}  + \sup_{\yp <0}\norm[\Ltwo(\Rsp,\diff \xp)]{\frac{1}{\Psizp(\xp+i\yp,t)} - 1} < \infty
\end{align}
\end{enumerate}
The solution gives rise to a solution $(\vboldbar, P) = (\F\compose\Psi^{-1}, \Pfrak\compose\Psi^{-1} )$ of the water wave equation $\eqref{eq:Euler}$ so long as $\Sigma(t) = \cbrac{Z = \Psi(\ap,t) \suchthat \ap\in \Rsp}$ is a Jordan curve. Moreover if the initial interface is chord-arc, that is $\Zap(\cdot,0) \in \Lone_{loc}(\Rsp)$ and there is constant $0<\delta<1$, such that
\begin{align*}
\delta\int_\ap^\bp \abs{\Zap(\gamma,0)}\diff\gamma \leq \abs{\Z(\ap,0) - \Z(\bp,0)} \leq \int_\ap^\bp \abs{\Zap(\gamma,0)}\diff\gamma \qq \forall -\infty<\ap<\bp<\infty
\end{align*}
then there is $T_0>0$, $T_1>0$ with $T_0,T_1$ depending only on $\Ecalone(0)$, such that on $[0,\min\cbrac*[\big]{T_0,\frac{\delta}{T_1}}]$, the initial value problem of the water wave equation \eqref{eq:Euler} has a solution satisfying $\Ecalone(t)<\infty$ and \eqref{lowerreg}, and the interface $Z = Z(\cdot,t)$ is chord arc.

\end{thm}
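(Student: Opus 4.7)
The statement is Wu's existence result (\cite{Wu15,Wu18}), so any proposed proof must follow the approximation strategy enabled by the Kinsey-Wu energy \cite{KiWu18}. My plan is to construct the solution as a limit of smooth solutions for which classical Sobolev well-posedness \cite{Wu97,Wu99} applies, and then transfer regularity through the uniform bound on $\mathcal{E}_1$.

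First, I would regularize the initial data. Given $(\F, \Psi, \Pfrak)(0)$ with $\mathcal{E}_1(0) < \infty$ and $c_0 < \infty$, I would convolve the boundary data (or, equivalently, the non-tangential restriction of $\Psi_{\zp}(\cdot,0)$ and $\F(\cdot,0)$ to horizontal lines $\yp = -\epsilon$) with the Poisson kernel $K_{-\epsilon}$, producing a family $(\F^\epsilon, \Psi^\epsilon, \Pfrak^\epsilon)(0)$ which is smooth on $\overline{\Pminus}$ with $\Psi^\epsilon(\cdot,0)$ univalent for $\epsilon$ small, and which realizes $\limsup_{\epsilon \to 0} \mathcal{E}_1^\epsilon(0) \leq \mathcal{E}_1(0)$ along with $\limsup_{\epsilon \to 0} c_0^\epsilon \leq c_0$. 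The pressure $\Pfrak^\epsilon(0)$ is recovered by solving the Dirichlet problem $\Delta \Pfrak^\epsilon = -2|\F_{\zp}^\epsilon|^2$. For each $\epsilon>0$ the classical theory produces a smooth solution $(\F^\epsilon, \Psi^\epsilon, \Pfrak^\epsilon)(t)$ of \eqref{eq:EulerRiem} on a short interval $[0, T^\epsilon]$.

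The heart of the argument is then the a priori estimate from \cite{KiWu18}: for sufficiently regular solutions one has $\frac{d}{dt}\mathcal{E}(t) \leq P(\mathcal{E}(t))$ for a fixed polynomial $P$, and since the quantities defining $\mathcal{E}_1$ and $\mathcal{E}$ agree in the smooth setting, this yields $\mathcal{E}_1^\epsilon(t) \leq C(\mathcal{E}_1(0))$ on a common interval $[0, T_0]$ depending only on $\mathcal{E}_1(0)$. A similar $L^2$-type estimate controls $c_0^\epsilon(t)$. These bounds force the $T^\epsilon$ to extend up to $T_0$ and provide uniform control, via Sobolev embedding in $\Pminus$, on $\F^\epsilon$, $1/\Psi_{\zp}^\epsilon$, $\Psi_t^\epsilon/\Psi_{\zp}^\epsilon$ and their first spatial derivatives on compact subsets of $\overline{\Pminus}\times[0,T_0]$. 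By Arzel\`a-Ascoli and weak-$*$ compactness in $L^\infty([0,T_0]; H^s)$-type spaces, I extract a subsequence converging to a triple $(\F, \Psi, \Pfrak)$; the equation \eqref{eq:EulerRiem} passes to the limit since every nonlinearity is either continuous in the strong topology of the compact sets or bilinear with one factor bounded in $L^\infty$. Lower semicontinuity of the $\mathcal{E}_1$-norms under weak limits and Fatou yield $\mathcal{E}_1(t) < \infty$ together with \eqref{lowerreg}. The regularity properties (1)-(3) follow from the preservation of holomorphicity/conformality under pointwise limits plus the continuity statements built into the uniform bounds.

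The subtle point is the chord-arc assertion. For smooth data the chord-arc constant $\delta^\epsilon(t)$ is continuous in $t$, and differentiating $|\Z^\epsilon(\ap,t) - \Z^\epsilon(\bp,t)|$ in time and using $\Zt^\epsilon = \F^\epsilon|_{\partial \Pminus}$ together with the uniform $L^\infty \cap \Hhalf$ control on $\Zt^\epsilon$ (from $\mathcal{E}_1^\epsilon$) produces an estimate $|\frac{d}{dt}\delta^\epsilon(t)| \leq T_1$ where $T_1$ depends only on $\mathcal{E}_1(0)$. Hence $\delta^\epsilon(t) \geq \delta/2$ on $[0, \min\{T_0, \delta/T_1\}]$, which survives the limit and shows the limiting interface is chord-arc on this interval. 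On such an interval $\Psi(\cdot,t)$ extends to a homeomorphism of $\overline{\Pminus}$ onto $\overline{\Omega(t)}$, so $(\vboldbar, P) = (\F \circ \Psi^{-1}, \Pfrak \circ \Psi^{-1})$ is a bona fide solution of \eqref{eq:Euler}. The main obstacle throughout is that the energy $\mathcal{E}_1$ provides only very weak control of the geometry at angled crests, so all the compactness arguments must be localized away from (and then quantitatively handle) points where $1/\Zap$ may blow up; this is precisely what the weighted Sobolev structure of $\mathcal{E}_1$ is designed to accommodate.
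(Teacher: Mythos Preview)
This theorem is quoted from \cite{Wu15,Wu18} and is not proved in the present paper; the paper only summarizes the strategy --- mollify by the vertical shift $\Psi^\ep(\zp,0)=\Psi(\zp-i\ep,0)$, $\F^\ep(\zp,0)=\F(\zp-i\ep,0)$, invoke the Kinsey--Wu a priori estimate together with a blow-up criterion to obtain a uniform lifespan $T_0$, then pass to the limit $\ep\to 0$ --- and records the specific convergence statements (a)--(f) that are used later. Your sketch follows exactly this route and is correct in outline.

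Two small corrections are worth making. For the chord-arc part, the argument in \cite{Wu15} (and as reproduced in the proof of the cusp lemma here) is carried out in \emph{Lagrangian} coordinates and rests on the pointwise bound $|\ztal^\ep|(\al,t)\le C(\Ecalone(0))\,|\zal^\ep|(\al,0)$, equivalently the $L^\infty$ control of $\Ztap/\Zap$, which gives directly
\[
\bigl|\,|z^\ep(\al,t)-z^\ep(\be,t)|-|\Z^\ep(\al,0)-\Z^\ep(\be,0)|\,\bigr|\ \le\ tC(\Ecalone(0))\int_\al^\be|\Zap^\ep(\gamma,0)|\,d\gamma;
\]
merely $L^\infty\cap\Hhalf$ control of $\Zt$ would not suffice for this comparison with the initial arclength. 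Second, your last sentence has the degeneracy backwards: at angled crests it is $\Zap$ that blows up and $1/\Zap$ that vanishes, and the weighted structure of $\Ecalone$ is designed so that the factors $1/\Zap$ absorb this blow-up rather than create one.
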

\begin{rmk}
The solutions constructed in \thmref{thm:Wuexistence} were shown to be unique in a suitable class of solutions which can be approximated by smooth solutions. See \cite{Wu18} for more details.
\end{rmk}

\thmref{thm:Wuexistence} is proved by first mollifying the initial data, then solving the initial value problem to the boundary equation \eqref{form:Zttbar} and showing that the smooth solution exists in a the time interval $[0,T_0]$, where $T_0$ is independent of the mollification parameter $\ep$. The fact that $T_0$ is independent of $\ep$ is proved using the apriori estimate proved in \cite{KiWu18} and by a blow up criterion. These solutions give rise to solutions of the equation \eqref{eq:EulerRiem} in the interval $[0,T_0]$, and a solution for data satisfying $\Ecalone(0)<\infty$ is constructed by taking a limit as $\ep\to0$.  As we will need some elements of the proof, we describe some of the notation and statements proved in the proof of this result.

Let the initial data be $(\Psi,\F,\Pfrak)(0)$ and let $0<\ep\leq 1$. Define
\begin{align*}
\Z^\ep (\ap,0) = \Psi(\ap-\ep i,0), \quad \Ztbar^\ep (\ap,0) = \F(\ap-\ep i,0),\quad  \h^\ep(\al,0) = \al \\
\F^\ep(\zp,0) = \F(\zp-\ep i,0), \quad\Psi^\ep(\zp,0) = \Psi(\zp - \ep i,0) \qquad
\end{align*}
Similarly define $\bvar^\ep = h_t^\ep\compose(\h^\ep)^{-1}$. Also define
\begin{align*}
\Aone^\ep = 1 + \frac{1}{2\pi}\int \frac{\abs{\Ztep(\ap,t) - \Ztep(\bp,t)}^2}{(\ap-\bp)^2} \diff\bp 
\end{align*}

 Then as part of the existence result of the smooth solution $(\Z^\ep,\Ztbar^\ep)(t)$ in $[0,T_0]$ of the equation \eqref{form:Zttbar}, it is shown that (see \cite{KiWu18,Wu15}) for all $0<\ep\leq 1$ and $t\in[0,T_0]$ we have
\begin{align}\label{ineq:quantE}
\begin{split}
 \norm[2]{\Ztapep} + \norm[2]{\pap\frac{1}{\Zapep}} +  \norm[\infty]{\Aone^\ep} + \norm[2]{\oneZapep\pap\Aone^\ep}  &  \leq C(\Ecalone(0)) \\
 \norm[\infty]{\bvarap^\ep} + \norm[\infty]{\frac{\h_{t\al}^\ep}{\hal^\ep}}  + \norm[\infty]{\frac{\ztal^\ep}{\zal^\ep}} + \norm[2]{\pap\brac*[\bigg]{ \frac{1}{(\Zap^\ep)^2}\Ztap^\ep}} & \leq C(\Ecalone(0))
\end{split}
\end{align}
and also
\begin{align}\label{ineq:quantE2}
 \norm[\infty]{\Ztep} + \norm[\infty]{\frac{1}{\Zapep}} \leq C(c_0,\Ecalone(0))
\end{align}

Using these it can be easily seen that there exists constants $0<c_1,c_2 < \infty$ depending only on $\Ecalone(0)$ such that for all $(\al,t) \in \Rsp\times[0,T_0]$ and for all $0<\ep\leq 1$ we have
\begin{align*}
c_1 \leq \abs{\hal^\ep(\al,t)} \leq c_2   \\
\tx{and } c_1\abs{\zal^\ep}(\al,0) \leq \abs{\zal^\ep}(\al,t) \leq c_2\abs{\zal^\ep}(\al,0)  \\
\tx{and }  \abs{\ztal^\ep}(\al,t) \leq c_2\abs{\zal^\ep}(\al,0) 
\end{align*}
If $U\subset \Rsp^n$ then we will use the notation $f_n \Rightarrow f$ on $U$ to mean uniform convergence on compact subsets of $U$.  In the proof of \thmref{thm:Wuexistence} a subsequence is taken $\ep_j \to 0$ and for convenience it is replaced by $\ep$. In the proof of our main results in the next section, we will also use this notation and we will freely take a subsubsequence of the subsequence used in \cite{Wu15} as it does not affect the result. As part of the proof of \thmref{thm:Wuexistence}, the following statements are proved as $\ep = \ep_j \to 0$

\begin{enumerate}[label=(\alph*)]
\item There exists a continuous function $h:\Rsp\times[0,T_0] \to \Rsp$ so that $h(\cdot,t):\Rsp \to \Rsp$ is a homeomorphism and
\begin{align*}
h^\ep \Rightarrow h \quad \tx{ and } \quad (h^\ep)^{-1} \Rightarrow h^{-1} \qq \tx{ on } \Rsp\times[0,T_0]
\end{align*}
Moreover there exists constants $c_1, c_2 > 0$ depending only on $c_0$ and $\Ecalone(0) $ so that
\begin{align*}
0< c_1 \leq \frac{h(\al,t) - h(\be,t)}{\al - \be} \leq c_2 < \infty \qq \tx{ for all }  \al,\be \in \Rsp \tx{ with }\al \neq \be \tx{ and } t \in [0,T_0]
\end{align*}

\item There exists a continuous function $z:\Rsp\times[0,T_0] \to \Csp$ such that $z$ is twice continuously differentiable with respect to $t$, with $\zt$ and $\ztt$ being continuous and bounded functions on $\Rsp\times[0,T_0]$ satisfying
\begin{align*}
\zep \Ra \z \qq \ztep \Ra \zt \qq \zttep \Ra\ztt \qq \tx{ on } \Rsp\times[0,T_0]
\end{align*}

\item If $\Z(\ap,t) = \z(\hinv(\ap,t),t) $, $\Zt(\ap,t) = \zt(\hinv(\ap,t),t) $ and $\Ztt(\ap,t) = \ztt(\hinv(\ap,t),t) $, then observe that $\Z,\Zt$ and $\Ztt$ are continuous functions on $\Rsp\times[0,T_0]$ with $\Zt$ and $\Ztt$ being bounded as well. We also have
\begin{align*}
\Zep \Ra \Z \qq \Ztep \Ra \Zt \qq \Zttep \Ra \Ztt \qq \tx{ on } \Rsp\times[0,T_0]
\end{align*}

\item There exists a continuous function $\Psi:\Pminusbar \times [0,T_0] \to \Csp$ such that $\Psi(\cdot,t)$ is conformal on $\Pminus$ and $\onePsizp$ extends continuously to $\Pminusbar$. Its boundary value is given by $\Z(\ap,t) = \Psi(\ap,t)$ and we also have
\begin{align*}
\Psiep \Ra \Psi \qq \onePsizpep \Ra \onePsizp \qq \tx{on } \Pminusbar \times[0,T_0]\\
\Psitep \Ra \Psit \qq  \Psizpep \Ra \Psizp  \qq \tx{on } \Pminus \times[0,T_0]
\end{align*}
The proof also shows that $\frac{\Psit}{\Psizp}$ extends continuously to $\Pminusbar$ and we have
\begin{align*}
\frac{\Psitep}{\Psizpep} \Ra \frac{\Psit}{\Psizp} \qq \tx{ on } \Pminusbar\times[0,T_0]
\end{align*}

\item There exists a continuous and bounded function $\F:\Pminusbar \times [0,T_0] \to \Csp$ such that $\F(\cdot,t)$ is holomorphic on $\Pminus$. Its boundary value is given by $\Ztbar(\ap,t) = \F(\ap,t)$ and we also have
\begin{align*}
\Fep \Ra \F \qq \tx{ on } \Pminusbar \times[0,T_0] \\
\Fzpep \Ra \Fzp \qq \tx{ on } \Pminus \times[0,T_0]
\end{align*}
Also $\F$ is continuously differentiable with respect to $t$ with $\Ftep \Ra \Ft$ on $\Pminus\times[0,T_0]$. 

\item The pressure $\Pfrakep$ for the smooth solutions is given by 
\begin{align}\label{form:Pfrakep}
\Pfrakep(\zp,t) = -\half\abs{\Fep(\zp,t)}^2 - \yp +  \half K_\yp\conv (\abs{\Ztbarep}^2)(\xp,t) 
\end{align}
with $\Pfrakep = 0 $ on $\partial \Pminus$. Define the function $\Pfrak:\Pminus\times[0,T_0] \to \Rsp$ by
\begin{align}\label{form:Pfrak}
\Pfrak(\zp,t) = -\half\abs{\F(\zp,t)}^2 - \yp +  \half K_\yp\conv (\abs{\Ztbar}^2)(\xp,t) 
\end{align}
Then $\Pfrak$ extends to  a continuous function on $\Pminusbar\times[0,T_0]$  with $\Pfrak \in C([0,T_0],C^\infty(\Pminus))$ and $\Pfrak = 0 $ on $\partial \Pminus$. We also have
\begin{align*}
\Pfrakep \Ra \Pfrak \qq \tx{ on } \Pminusbar \times[0,T_0] \\
(\pxp - i\pyp)\Pfrakep \Ra (\pxp - i\pyp)\Pfrak \qq \tx{ on } \Pminus \times[0,T_0]
\end{align*}

\end{enumerate}

\medskip
\noindent \textbf{Heuristics}
\medskip

We now explain the heuristics about the evolution of the angle as stated in \cite{KiWu18} and also give a heuristic explanation about the nature of $\grad P$ near the singularities. Let $\nu>0$ and assume that there is angled crest of angle $\nu\pi$ at $\ap = 0$ in conformal coordinates. Hence as $\ap \to 0$, we have
\begin{align*}
\Z (\ap) \sim (\ap)^\nu \qq \Zap (\ap) \sim (\ap)^{\nu-1} \qq \oneZap(\ap) \sim (\ap)^{1-\nu} \qq \pap\oneZap(\ap) \sim (\ap)^{-\nu}
\end{align*}
Hence we see that for interfaces with angled crests with $0<\nu<1$ we have $\oneZap(\ap) = 0$ at the singularity $\ap=0$. In the next section we will take this as the definition of singularities. From the energy $\Ecal$, we see that $\pap\oneZap \in \Ltwo$ and hence we need $\nu<\half$. Now if $z(\al,t)$ is the interface in Lagrangian coordinates, then we have
\begin{align*}
\zal = \abs{\zal}e^{i\thvar}
\end{align*}
where $\thvar $ is the angle of the interface with respect to the x-axis in Lagrangian coordinates. Hence 
\begin{align*}
\thvar = \Imag(\ln \zal) \qq \pt\thvar = \Imag\brac{{\frac{\ztal}{\zal}}}
\end{align*}
Now $\nobrac{\frac{\ztal}{\zal}} \compose \hinv = \frac{\Ztap}{\Zap}$ and we have
\begin{align*}
\Ztap = \brac{\frac{\Ztap}{\Zap}} \Zap
\end{align*}
Now from the energy $\Ecal$ we know that $\Ztap \in \Ltwo$, and for angles $0<\nu<\half$ it is clear that $\Zap$ is not in $\Ltwo$. Hence from the above equation we see that $\frac{\Ztap}{\Zap} \to 0$ as $\ap \to 0$. By changing coordinates we get $\frac{\ztal}{\zal} \to 0$ at the crest and hence the angle of the crest does not change with time. 

It is clear that the above argument is only heuristic. To make use of the above argument, we first need to show that the interface has an angled crest for $t>0$. We also need to make sense of the above equation in a pointwise sense and show that $\frac{\Ztap}{\Zap}$ is continuous at the singularity, which is implicitly used to show that $\frac{\Ztap}{\Zap} \to 0$ at the singularity. 

To prove rigorous results on the evolution of the angle of the interface, we focus mostly on getting precise evolution results for points on the boundary away from the singularities. To approach the singularity and in particular to deal with the continuity of $\frac{\Ztap}{\Zap}$ at the singularity, first observe that $\Zt$ is the boundary value of $\Fbar$ and $\oneZap$ is the boundary value of $\onePsizp$. Hence formally, the boundary value of $\onePsizp\Fzpbar$ is $\frac{\Ztap}{\Zap}$. We show that $\onePsizp\Fzpbar$ extends continuously from $\Pminus$ to $\Pminusbar$ and that $\onePsizp\Fzpbar$ vanishes at the singularities. 

Let us now understand the behavior of $\grad P$ near the singularity. Note that $\Pfrak = P \compose \Psi $ implies $(\px -i\py) P = \brac{\frac{1}{\Psizp} (\pxp - i\pyp)\Pfrak}\compose\Psiinv$ and from \eqref{eq:gradPbdry} we see that
\begin{align*}
i \frac{\Aone}{\Zap}(\ap,t) = \brac{\frac{1}{\Psizp}(\pxp - i\pyp)\Pfrak}(\ap,t) \qq \tx{ on } \partial\Pminus
\end{align*}
As $\norm[\infty]{\Aone} \leq C(\Ecalone(0)) $ and $\oneZap(\ap) \sim (\ap)^{1-\nu}$ as $\ap \to 0$, we see that $\grad P \to 0$ near the singularity. 

The issue with this argument is that the above equation is not available for singular solutions and we do not know if $\frac{1}{\Psizp} (\pxp - i\pyp)\Pfrak$ even makes sense on the boundary. To remedy this, we prove that $\frac{1}{\Psizp} (\pxp - i\pyp)\Pfrak$ extends continuously from $\Pminus$ to $\Pminusbar$ and that it vanishes at the singularities.

\medskip
\section{Main results}\label{sec:results}
\medskip

In all that follows let $T_0>0$ and let $(\F,\Psi,\Pfrak)$ be a solution to equation \eqref{eq:EulerRiem} in the time interval $[0,T_0]$ given by \thmref{thm:Wuexistence}. For $t\in [0,T_0]$ define
\begin{align*}
\tx{ Singular set } &= S(t) = \cbrac{\ap \in \Rsp \suchthat \frac{1}{\Psizp}(\ap,t) =0} \\
\tx{ Non-Singular set}  & = NS(t) = \Rsp \backslash S(t)
\end{align*}
Note that the definition makes sense as $\frac{1}{\Psizp}$ is continuous on $\Pminusbar\times[0,T_0]$. We will also identify $S(t)$ and $NS(t)$ as subsets of $\Pminusbar$ so that it is meaningful to talk of sets such as $\Pminusbar\backslash S(t)$. An important observation is that $S(t) \subset \Rsp$ is a closed set of Lebesgue measure zero. This is because it is the boundary value of a bounded holomorphic function $\frac{1}{\Psizp}$ and hence by the uniqueness theorem of F. and M. Riesz (see Theorem 17.18 in \cite{Ru87}) its zero set on the boundary is of measure zero. Hence given any $\ap\in S(t)$, there always exists a sequence $\cbrac{\al_n'}$  with $\al_n' \in NS(t)$ such that $\al_n' \to \ap$. We have a description of dynamics of this set

\begin{lem}\label{lem:propofsing}
The singular set at time $t \in [0,T_0]$ is given by $S(t) = \cbrac{h(\al,t) \in \Rsp \suchthat \al \in S(0) }$ 
\end{lem}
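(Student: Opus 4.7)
The strategy is to use the smooth approximations $(\Fep,\Psiep,\Pfrakep)$ to derive a two-sided comparison, uniform in $\ep$, between $|\Psizpep(h^\ep(\al,t),t)|$ and $|\Psizpep(\al,0)|$, then pass to the limit $\ep\to 0$. This transports the dynamics of the singular set to the Lagrangian flow: a particle at Lagrangian label $\al$ sits at a singularity at time $t$ iff it started at one.

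\textbf{Setup at the $\ep$-level.} From $\zalep = (\Zapep\compose\h^\ep)\,\halep$ I would write
\[
\Zapep(h^\ep(\al,t),t) \;=\; \frac{\zalep(\al,t)}{\halep(\al,t)}.
\]
Since $\h^\ep(\al,0)=\al$ and $\halep(\al,0)=1$, this gives $\Zapep(\al,0)=\zalep(\al,0)$. Combining with the uniform bounds $c_1\le |\halep|\le c_2$ and $c_1|\zalep(\al,0)|\le |\zalep(\al,t)|\le c_2|\zalep(\al,0)|$ from \eqref{ineq:quantE}--\eqref{ineq:quantE2} (with constants depending only on $c_0$ and $\Ecalone(0)$) yields
\[
\frac{c_1}{c_2}\,|\Zapep(\al,0)| \;\le\; |\Zapep(h^\ep(\al,t),t)| \;\le\; \frac{c_2}{c_1}\,|\Zapep(\al,0)|.
\]
On $\partial\Pminus$ one has $\Zapep(\ap,t)=\Psizpep(\ap,t)$, and for the smooth approximants $\Psizpep$ never vanishes on $\Pminusbar$, so taking reciprocals gives the equivalent bound
\[
\frac{c_1}{c_2}\,\left|\onePsizpep(h^\ep(\al,t),t)\right| \;\le\; \left|\onePsizpep(\al,0)\right| \;\le\; \frac{c_2}{c_1}\,\left|\onePsizpep(h^\ep(\al,t),t)\right|.
\]

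\textbf{Passing to the limit.} Fix $(\al,t)\in \Rsp\times[0,T_0]$. From properties (a) and (d) of the approximating sequence, $h^\ep \Ra h$ on $\Rsp\times[0,T_0]$ and $\onePsizpep\Ra\onePsizp$ on $\Pminusbar\times[0,T_0]$. Since $\onePsizp$ is continuous on $\Pminusbar\times[0,T_0]$, a standard compactness argument (split the difference via $\onePsizp(h^\ep(\al,t),t)$) gives
\[
\onePsizpep(h^\ep(\al,t),t) \;\longrightarrow\; \onePsizp(h(\al,t),t).
\]
Combined with $\onePsizpep(\al,0)\to\onePsizp(\al,0)$ by (d) on the boundary, passing to the limit in the displayed inequality produces
\[
\frac{c_1}{c_2}\,\left|\onePsizp(h(\al,t),t)\right| \;\le\; \left|\onePsizp(\al,0)\right| \;\le\; \frac{c_2}{c_1}\,\left|\onePsizp(h(\al,t),t)\right|.
\]

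\textbf{Conclusion.} This two-sided bound shows $\onePsizp(h(\al,t),t) = 0$ if and only if $\onePsizp(\al,0)=0$, i.e., $h(\al,t)\in S(t)\iff \al\in S(0)$. Because $h(\cdot,t)\colon\Rsp\to\Rsp$ is a homeomorphism (property (a)), this is exactly the stated description $S(t)=\{h(\al,t) : \al\in S(0)\}$. The only subtlety is the composition limit $\onePsizpep(h^\ep(\al,t),t)\to\onePsizp(h(\al,t),t)$; this is the main place where the continuity of $\onePsizp$ up to the boundary (not just inside $\Pminus$) plus uniform-on-compacts convergence are both needed. Everything else reduces to the uniform bi-Lipschitz estimates for $\h^\ep$ and the comparability of $|\zalep(\cdot,t)|$ with $|\zalep(\cdot,0)|$ already established in \cite{KiWu18,Wu15}.
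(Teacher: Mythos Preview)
Your proof is correct and follows essentially the same route as the paper: derive an $\ep$-uniform two-sided comparison between $|1/\Psizpep(h^\ep(\al,t),t)|$ and $|1/\Psizpep(\al,0)|$, then pass to the limit using $h^\ep\Rightarrow h$ and $1/\Psizpep\Rightarrow 1/\Psizp$ on $\Pminusbar\times[0,T_0]$. The only cosmetic difference is that the paper obtains the comparison by writing the ODE $\pt(h_\al^\ep/z_\al^\ep)=(h_\al^\ep/z_\al^\ep)(h_{t\al}^\ep/h_\al^\ep - z_{t\al}^\ep/z_\al^\ep)$ and integrating to an exponential formula (which is later reused in the proof of \thmref{thm:mainangle}), whereas you quote the already-stated consequences $c_1\le|\halep|\le c_2$ and $c_1|\zalep(\al,0)|\le|\zalep(\al,t)|\le c_2|\zalep(\al,0)|$ and combine them; these are equivalent.
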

Note that this also implies that $NS(t) = \cbrac{h(\al,t) \in \Rsp \suchthat \al \in NS(0) }$. This lemma says that the singularities propagate via the Lagrangian flow i.e. particles at the singularities stay at the singularities. In particular the singularities are preserved in the sense that the interface doesn't smooth out or any new singularities form during the time in which $\Ecalone (t) <\infty$. This important fact is a simple consequence of the nature of the energy $\Ecalone$. We now come to our main result on the evolution of the angle of the interface at the singularities. 
\begin{thm}\label{thm:mainangle}
Let $(\F,\Psi,\Pfrak)$ be a solution in $[0,T_0]$ as given by \thmref{thm:Wuexistence}. Then
\begin{enumerate}[leftmargin =*, align=left]
\item For every fixed $t\in [0,T_0]$, the  functions  $\brac{\frac{1}{\Psizp}\F_\zp}(\cdot,t)$ and $\brac{\frac{1}{\Psizp}\Fzpbar}(\cdot,t)$ extend to continuous functions on $\Pminusbar$ with 
\begin{align*}
\brac{\frac{1}{\Psizp}\F_\zp}(\ap,t) = \brac{\frac{1}{\Psizp}\Fzpbar}(\ap,t) = 0 \quad \tx{ for all } \ap \in S(t)
\end{align*}

\item If $t\in [0,T_0]$, then we have the following formula
\begin{align*}
\frac{\Zap}{\Zapabs}(h(\al,t),t) = \frac{\Zap}{\Zapabs}(\al,0)\exp\cbrac{i\Imag \brac{\int_0^t \brac{\frac{1}{\Psizp}\Fzpbar}(h(\al,s),s)\diff s}} \quad \tx{ for all } \al \in NS(0)
\end{align*}
Moreover if $\al \in S(0)$, and if $\{\al_n\}$ is any sequence such that $\al_n \in NS(0)$ for all $n$ with $\al_n \to \al$, then 
\begin{align*}
\lim_{\al_n \to \al} \frac{\frac{\Zap}{\Zapabs}(h(\al_n,t),t)) }{\frac{\Zap}{\Zapabs}(\al_n,0)} = 1 
\end{align*}

\end{enumerate}
\end{thm}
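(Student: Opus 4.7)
The plan is to integrate, along Lagrangian trajectories, a pointwise evolution equation for the angle of the tangent. Formally, writing $\theta(\ap,t) = \Imag \log \Zap(\ap,t)$ so that $\Zap/\abs{\Zap} = e^{i\theta}$, the commutator identity $\Dt \Zap = \Ztap - \bap \Zap$ together with the real-valuedness of $\bvar$ yields $\Dt \theta = \Imag(\Ztap/\Zap)$. On the boundary, $\Psizp|_{\partial \Pminus} = \Zap$, and since $\Fbar$ is anti-holomorphic with boundary value $\Zt$ one has $\Fzpbar|_{\partial \Pminus} = \Ztap$; hence $\Ztap/\Zap = (\onePsizp\Fzpbar)|_{\partial\Pminus}$. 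Precomposing with $\ap = h(\al,t)$ turns $\Dt$ into $\pt$, and integrating then exponentiating gives the first formula of part (2). Part (1) is what supplies the pointwise meaning of $\onePsizp\Fzpbar$ on the boundary that makes this rigorous for singular solutions.

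For part (1), the plan is to pass to the limit in the smooth approximations provided by \thmref{thm:Wuexistence}. Both $\onePsizpep\Fzpep$ and $\onePsizpep\Fzpepbar$ are smooth on $\Pminusbar$, and from $\Fzpep \Ra \Fzp$ and $\onePsizpep \Ra \onePsizp$ in the interior the products already converge locally uniformly on $\Pminus$. To upgrade this to $\Pminusbar$, I would invoke the maximum principle for holomorphic functions on $\Pminus$ (with the decay of the products at infinity handling the non-compactness), which reduces the question to uniform convergence of the boundary values $\Ztapepbar/\Zapep$ and $\Ztapep/\Zapep$. On compact subsets $K \subset NS(0)$, where $\abs{\oneZapep}$ is uniformly bounded below, the boundary bound $\norm[2]{\oneZapep \pap(\oneZapep \Ztapepbar)} \le C$ derived from the energy gives a uniform $H^1(K)$ bound on $\oneZapep \Ztapepbar$ and hence uniform convergence on $K$. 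The main technical obstacle lies at a singular point $\ap_0 \in S(0)$: here the plan is to exploit the uniform $\Linfty$-bound on $\onePsizpep$ together with the energy control on the holomorphic derivative of $\onePsizpep\Fzpep$ to produce a uniform modulus of continuity up to the boundary in a $\Pminusbar$-neighborhood of $(\ap_0,t)$. Once continuity of the extension is in hand, the vanishing at singular points follows by factoring out $\onePsizp$, whose continuous extension is zero at $\ap_0$, against the uniformly bounded remaining factor.

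Given part (1), part (2) reduces to a careful passage to the limit. For $\al \in NS(0)$ the formal computation above is pointwise valid for each smooth approximation, giving
\begin{align*}
\frac{\Zapep}{\abs{\Zapep}}(h^\ep(\al,t),t) = \frac{\Zapep}{\abs{\Zapep}}(\al,0)\, \exp\cbrac{i\Imag \int_0^t (\onePsizpep\Fzpepbar)(h^\ep(\al,s),s)\, \diff s}.
\end{align*}
Sending $\ep \to 0$ using $h^\ep \Ra h$, the uniform convergence of $\Zapep/\abs{\Zapep}$ on compact subsets of $NS(0)$, and the locally uniform convergence of $\onePsizpep\Fzpepbar$ to $\onePsizp\Fzpbar$ on $\Pminusbar\times [0,T_0]$ from part (1), together with dominated convergence in the time integral, yields the stated formula. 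For $\al \in S(0)$ and $\al_n \to \al$ with $\al_n \in NS(0)$, the same formula applied at $\al_n$ writes the ratio in the theorem as the exponential along the trajectory $s \mapsto h(\al_n,s)$. By \lemref{lem:propofsing}, $h(\al,s) \in S(s)$ for every $s$, so the continuous extension of $\onePsizp\Fzpbar$ from part (1) vanishes at $(h(\al,s),s)$. Uniform continuity of $h$ and of $\onePsizp\Fzpbar$ on a compact $\Pminusbar$-neighborhood of the trajectory $\{(h(\al,s),s) : s \in [0,T_0]\}$ forces the integrand to converge to zero uniformly in $s$, so the exponential tends to $1$ and the claim follows.
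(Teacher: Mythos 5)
Your outline of part (2) is essentially the paper's: integrate the pointwise identity $\pt\bigl(\Zap\circ h\bigr) = \bigl(\tfrac{\Ztap}{\Zap}-\bap\bigr)\Zap\circ h$ for the smooth approximants, identify $\tfrac{\Ztap}{\Zap}$ on the boundary with $\onePsizp\Fzpbar$, and pass to the limit; the treatment of $\al\in S(0)$ via \lemref{lem:propofsing} and continuity of $\onePsizp\Fzpbar$ is also the route the paper takes. The difficulty is entirely in part (1), and there your proposal leaves two genuine gaps.

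First, the continuity of the extension of $\onePsizp\Fzp(\cdot,t)$ up to a singular boundary point. You acknowledge this is the main obstacle, and propose to extract from the energy a ``uniform modulus of continuity up to the boundary in a $\Pminusbar$-neighborhood of $(\ap_0,t)$.'' But the energy controls $\norm[2]{\oneZap\pap(\oneZap\Ztapbar)}$, \emph{not} $\norm[2]{\pap(\oneZap\Ztapbar)}$, and near $\ap_0$ the weight $\oneZap$ degenerates precisely because $\abs{\Zap}\to\infty$. So the fundamental-theorem-of-calculus estimate you would need,
\begin{align*}
\abs*{\brac{\onePsizp\Fzp}(\al_2+i\yp)-\brac{\onePsizp\Fzp}(\al_1+i\yp)} \leq \int_{\al_1}^{\al_2}\abs*{\pzp\brac{\onePsizp\Fzp}}\diff s,
\end{align*}
is not controlled. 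What \emph{is} controlled is the same estimate applied to $\brac{\onePsizp\Fzp}^2$ (and to $\brac{\onePsizp\Fzp}^3$), because $\pzp\brac{\onePsizp\Fzp}^2 = 2\,\Fzp\cdot\onePsizp\pzp\brac{\onePsizp\Fzp}$ is a product of two $L^2$-controlled quantities. This is exactly the paper's device: show that the almost-everywhere Fatou boundary value $f$ of $\onePsizp\Fzp$ is such that $f^2$ and $f^3$ extend to continuous functions $f_2$ and $f_3$ (\lemref{lem:ctsextension}), and then recover a continuous extension of $f$ itself by the ratio $f_3/f_2$, using $\sqrt{\abs{f_2}}$ as a modulus to handle the zero set of $f_2$. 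Without some such maneuver (or a substitute), your plan does not close.

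Second, the vanishing at $S(t)$. You propose to factor $\onePsizp\Fzp = \onePsizp\cdot\Fzp$ and argue that $\onePsizp\to 0$ while ``the remaining factor is uniformly bounded.'' This is false: the energy only gives $\sup_{\yp<0}\norm[\Ltwo(\diff\xp)]{\Fzp(\cdot+i\yp,t)}<\infty$, i.e.\ $\Fzp(\cdot,t)$ lies in a Hardy $\Ltwo$ class, and there is no $L^\infty$ bound near a singularity (indeed $\Fzp$ may blow up there). One factor tending to zero and the product being continuous do \emph{not} imply the product vanishes. The paper's actual argument is by contradiction: if $\onePsizp\Fzp(\ap_0,t)=c\neq 0$ then $\abs{\Fzp}\gtrsim\abs{\Psizp}$ in a $\Pminusbar$-neighborhood of $\ap_0$, and \lemref{lem:Psiblowup} shows $\int_{\ap_0-\delta}^{\ap_0+\delta}\abs{\Psizp(s+i\yp,t)}^2\diff s\to\infty$ as $\yp\to 0^-$, contradicting the uniform $\Ltwo$ bound on $\Fzp$. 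You need this kind of quantitative blowup of $\Psizp$ (itself a consequence of the $L^\infty$ bound on $\pzp\bigl(\tfrac{1}{\Psizp^2}\bigr)$ in \lemref{lem:Linftybound}) — simply invoking continuity of $\onePsizp$ is not enough.

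A smaller point on part (1): the maximum-principle step you sketch (reducing interior uniform convergence on $\Pminusbar$ to convergence of boundary traces of the approximants) is not how the paper proceeds for $\onePsizp\Fzp$; the paper works with the fixed solution's Fatou boundary values rather than uniform convergence of $\ep$-boundary traces across the singularity, precisely because uniform control of the latter near $S(t)$ is what you cannot get. The approximation argument is used (Step 4 of the paper's proof) only for $\tfrac{1}{\Psizp^2}\Fzp$ and for establishing $\onePsizpep\Fzpepbar\Ra\onePsizp\Fzpbar$ on $\Pminusbar\setminus S(t)$, where there is no degeneracy.
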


\begin{cor}\label{cor:mainangle}
Let $(\F,\Psi,\Pfrak)$ be a solution in $[0,T_0]$ as given by \thmref{thm:Wuexistence} and assume that there exists $N\geq 1$ isolated singularities of initial interface at locations $\al_n \in S(0)$ for $1\leq n \leq N$ in conformal coordinates. Also assume that there exists unit vectors $\beta_n,\gamma_n$ for $1\leq n\leq N$, such that
\begin{align*}
\lim_{\al\to \al_n^{-}} \frac{\Zap}{\Zapabs}(\al,0) = \beta_n \q \text{ and } \lim_{\al\to \al_n^{+}} \frac{\Zap}{\Zapabs}(\al,0) = \gamma_n \q \text{ for } 1\leq i\leq N
\end{align*}
Then for all $t\in [0,T_0]$, there are N isolated singularities of the interface at locations $h(\al_n,t) \in S(t)$ in conformal coordinates. We also have
\begin{align*}
\lim_{\al\to \al_n^{-}} \frac{\Zap}{\Zapabs}(h(\al,t),t) = \beta_n \q \text{ and } \lim_{\al\to \al_n^{+}} \frac{\Zap}{\Zapabs}(h(\al,t),t) = \gamma_n \q \text{ for } 1\leq i\leq N
\end{align*}
\end{cor}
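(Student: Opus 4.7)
The plan is to deduce the corollary directly from \lemref{lem:propofsing} and the second part of \thmref{thm:mainangle}; almost no new work is required beyond packaging.

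I would first address the existence of $N$ isolated singularities at $h(\al_n, t)$. By \lemref{lem:propofsing}, $S(t) = \{h(\al,t) : \al \in S(0)\}$. From item (a) of the convergence statements recalled in \secref{sec:sing}, $h(\cdot,t)$ is a bi-Lipschitz homeomorphism of $\Rsp$ with uniform bounds $0 < c_1 \leq (h(\al,t) - h(\be,t))/(\al - \be) \leq c_2$; it is therefore orientation-preserving and sends isolated points of $S(0)$ to isolated points of $S(t)$. Picking $\delta > 0$ with $(\al_n - \delta, \al_n + \delta) \cap S(0) = \{\al_n\}$, the image $h((\al_n - \delta, \al_n + \delta),t)$ is an open neighborhood of $h(\al_n, t)$ whose intersection with $S(t)$ equals $\{h(\al_n, t)\}$, so these are indeed $N$ isolated singularities.

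Next, for the one-sided tangent limits, I would fix $n$ and handle the left limit; the right one is identical. Let $\{\al_k\} \subset \Rsp$ be any sequence with $\al_k \to \al_n^-$. Because $\al_n$ is isolated in $S(0)$, we have $\al_k \in NS(0)$ for all sufficiently large $k$, so \thmref{thm:mainangle}(2) applies to the sequence $\{\al_k\}$ and gives
\begin{align*}
\lim_{k \to \infty} \frac{\frac{\Zap}{\Zapabs}(h(\al_k,t),t)}{\frac{\Zap}{\Zapabs}(\al_k,0)} = 1.
\end{align*}
Since $\frac{\Zap}{\Zapabs}(\al_k, 0) \to \beta_n$ by hypothesis, multiplying yields $\frac{\Zap}{\Zapabs}(h(\al_k, t), t) \to \beta_n$, which is the desired left limit. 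The right limit is obtained by the same argument with $\gamma_n$ in place of $\beta_n$.

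I do not expect any genuine obstacle: the corollary is essentially a clean packaging of \thmref{thm:mainangle} and \lemref{lem:propofsing}. The only point to watch is that \thmref{thm:mainangle}(2) supplies the ratio limit only for sequences in $NS(0)$, but this is automatic here thanks to the isolation of each $\al_n$ in $S(0)$.
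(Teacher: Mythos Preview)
Your proposal is correct and is exactly what the paper has in mind: the paper simply remarks that ``the proof of the corollary is immediate from the theorem above,'' and your argument is a faithful unpacking of that remark using \lemref{lem:propofsing} and part (2) of \thmref{thm:mainangle}. The one subtlety you flag --- that sequences approaching an isolated $\al_n$ eventually lie in $NS(0)$ --- is precisely the point that makes the deduction go through.
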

\medskip

The proof of the corollary is immediate from the theorem above. Note that in the above corollary, $\beta_n$ is the left unit tangent vector and $\gamma_n$ is the right unit tangent vector of the interface at the singularity. Hence the result says that an interface with angled crests remains angled crested and both the left and right unit tangent vectors do not change with time.

It was left open by \cite{Wu15,Wu18} whether the solutions constructed in \thmref{thm:Wuexistence} satisfy the Euler equation on the boundary. We give an affirmative answer to this question.

\begin{thm}\label{thm:gradP}
Let $(\F,\Psi,\Pfrak)$ be a solution in $[0,T_0]$ as given by \thmref{thm:Wuexistence} and fix a time $t\in [0,T_0]$. Then the  function $\brac{\frac{1}{\Psizp} (\pxp - i\pyp)\Pfrak}(\cdot,t)$ extends to continuous function on $\Pminusbar$ with 
\begin{align*}
\Zttbar(\ap,t) - i = \brac{-\onePsizp(\pxp-i\pyp) \Pfrak}(\ap,t) \qq \tx{ for all } \ap \in \Rsp
\end{align*} 
In addition we have  $\Zttbar(\ap,t) = i $ for all $\ap \in S(t)$ 

\end{thm}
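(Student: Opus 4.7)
The plan is to prove three things: (A) continuous extension of $H:=\onePsizp(\pxp-i\pyp)\Pfrak$ from $\Pminus$ to $\Pminusbar$, (B) the boundary identity $\Zttbar-i=-H$ on $\Rsp$, and (C) the vanishing $\Zttbar(\ap,t)=i$ at singular points $\ap\in S(t)$. I would handle (C) first, as it is independent of the other two and follows directly from the smooth approximations: the boundary formula \eqref{form:Zttbar} applied to $(\Zep,\Ztbarep)$ gives $\abs{\Zttbarep(\ap,t)-i}=\Aone^\ep(\ap,t)\abs{\oneZapep(\ap,t)}$, so combining the uniform bound $\norm[\infty]{\Aone^\ep}\leq C(\Ecalone(0))$ from \eqref{ineq:quantE} with $\Zttbarep\Ra\Zttbar$ and $\oneZapep\Ra\oneZap$ on $\Rsp$, and passing to the limit at a fixed $\ap\in S(t)$ (where $\oneZap(\ap,t)=0$), yields $\abs{\Zttbar(\ap,t)-i}\leq C\cdot 0=0$.

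For (A) and (B) I would exploit the explicit formula \eqref{form:Pfrak}. The holomorphicity of $\F$ together with the identity $K_\yp\conv g=-(\mathcal{C}g+\overline{\mathcal{C}g})$, where $\mathcal{C}g(\zp):=\frac{1}{2\pi i}\int\frac{g(s)}{s-\zp}\diff s$ is the Cauchy integral in $\Pminus$, gives by direct computation on $\Pminus$
\[
(\pxp-i\pyp)\Pfrak=-\Fbar\,\Fzp+i-(\mathcal{C}(\abs{\Ztbar}^2))'.
\]
Dividing by $\Psizp$ yields the decomposition $H=A+B+G$ with $A:=-\Fbar\cdot\Fzp/\Psizp$, $B:=i/\Psizp$, and $G:=-(\mathcal{C}(\abs{\Ztbar}^2))'/\Psizp$. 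By \thmref{thm:mainangle}\,(1) combined with continuity of $\Fbar$ on $\Pminusbar$, $A$ extends continuously to $\Pminusbar$; by item (d) of \thmref{thm:Wuexistence}, so does $B$. The problem thus reduces to extending $G$.

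For $G$ I would work through the smooth approximations $G^\ep:=-(\mathcal{C}(\abs{\Ztbarep}^2))'/\Psizpep$, each holomorphic on $\Pminus$ and smooth on $\Pminusbar$. Interior convergence $G^\ep\Ra G$ on $\Pminus$ follows from the locally uniform convergence $\abs{\Ztbarep}^2\to\abs{\Ztbar}^2$ on $\Rsp$, a uniform $L^2$ bound on $\Ztep$ (which controls the tails of the Cauchy integral), and $\onePsizpep\Ra\onePsizp$. On the boundary, the smooth identity $H^\ep=A^\ep+B^\ep+G^\ep$ together with $H^\ep|_{\partial\Pminus}=i-\Zttbarep$ gives $G^\ep|_\partial=(i-\Zttbarep)-A^\ep|_\partial-B^\ep|_\partial$, with each term on the right converging uniformly on compact subsets of $\Rsp$ (using $\Zttbarep\Ra\Zttbar$ from items (b)--(c) and the convergences of $A^\ep,B^\ep$ from \thmref{thm:mainangle} and item (d)). The key step is then the maximum modulus principle applied to the holomorphic difference $G^{\ep_1}-G^{\ep_2}$ on the semidisk $\Omega_R:=\Pminus\cap B_R(0)$, whose boundary consists of the real segment $[-R,R]$ and the lower semicircle $\cbrac{\abs{\zp}=R,\ \Imag\zp\leq 0}$, both compact in $\Pminusbar$. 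Uniform convergence of $G^\ep$ on $[-R,R]$ comes from the boundary convergence, on compact subsets of the open semicircle from the interior convergence, and at the two corners $\pm R$ (which I would choose in $NS(t)$, possible since $S(t)$ has measure zero) from continuity. Hence $G^{\ep_1}-G^{\ep_2}$ is uniformly small on $\partial\Omega_R$, so by max modulus also on $\bar\Omega_R$; since every compact $K\subset\Pminusbar$ lies in some $\bar\Omega_R$, $G^\ep\Ra G$ on $\Pminusbar$. Thus $G$ is continuous on $\Pminusbar$, so is $H=A+B+G$, and its boundary value on $\Rsp$ is $A|_\partial+B|_\partial+\lim_\ep G^\ep|_\partial=i-\Zttbar$, establishing (A) and (B).

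The main obstacle lies in $G$: the Cauchy integral derivative $(\mathcal{C}g)'$ does not in general extend continuously from $\Pminus$ to $\Pminusbar$ even for $g\in L^1$, so the argument relies crucially on dividing by $\Psizp$ and using the smooth approximations as a bridge -- transferring the good boundary convergence of $G^\ep|_\partial$ (computable through the other, well-behaved pieces) back into uniform convergence in the interior through the maximum modulus principle. Particular care is needed at the corners where the semicircular arc meets the real axis, a technicality streamlined by choosing $R$ so that $\pm R\in NS(t)$.
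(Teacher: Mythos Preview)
Your decomposition $H=A+B+G$ is exactly the one the paper uses, and parts (C) and the interior convergence $G^\ep\Rightarrow G$ on $\Pminus$ are handled correctly. The gap is in your claim that $G^\ep|_\partial$ converges uniformly on compact subsets of all of $\Rsp$. This rests on the assertion that $A^\ep|_\partial=-\Ztep\,\Ztapepbar/\Zapep$ converges uniformly on compacta of $\Rsp$, which you attribute to \thmref{thm:mainangle}. But \thmref{thm:mainangle} only establishes that the \emph{limit} $\onePsizp\Fzp$ extends continuously to $\Pminusbar$; the convergence $\onePsizpep\Fzpep\Rightarrow\onePsizp\Fzp$ is proved (in Step~4 of that theorem's proof) only on $\Pminusbar\backslash S(t)$, because it goes through multiplying the $\Pminusbar$-convergent quantity $\frac{1}{(\Psizpep)^2}\Fzpep$ by $\Psizpep$, and $\Psizpep\Rightarrow\Psizp$ fails at $S(t)$. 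One cannot easily upgrade this: the boundary derivative $\pap(\Ztapepbar/\Zapep)$ is only in $L^1$, not in $L^p$ for any $p>1$, so \lemref{lem:unifconvPminusbar} does not apply to $\onePsizpep\Fzpep$ directly. Consequently your maximum-modulus argument breaks down whenever the segment $[-R,R]$ meets $S(t)$, and choosing $\pm R\in NS(t)$ does not help since the trouble is in the interior of the segment.

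The paper's fix is to work with $G^\ep/\Psizpep$ rather than $G^\ep$. The extra factor of $1/\Zapep$ makes the boundary value $g^\ep/\Zapep$ satisfy both $\norm[\infty]{g^\ep/\Zapep}\leq C$ and $\norm[2]{\pap(g^\ep/\Zapep)}\leq C$ (Step~3 of the paper's proof), so \lemref{lem:unifconvPminusbar} gives uniform convergence of $G^\ep/\Psizpep$ on compact subsets of all of $\Pminusbar$. From this $G/\Psizp$ is continuous on $\Pminusbar$; multiplying back by $\Psizp$ recovers continuity of $G$ on $\Pminusbar\backslash S(t)$, and continuity of $G$ across $S(t)$ is then checked separately via the boundary formula \eqref{form:g} together with \thmref{thm:mainangle}. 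Your max-modulus idea is attractive, but to make it work you would need to supply an independent proof that $\onePsizpep\Fzpep\Rightarrow\onePsizp\Fzp$ on all of $\Pminusbar$---which, as far as I can see, requires essentially the same $G/\Psizp$ detour the paper takes.
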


\begin{cor}\label{cor:gradP}
Let $(\F,\Psi,\Pfrak)$ be a solution in $[0,T_0]$ as given by \thmref{thm:Wuexistence} such that $\Sigma(t) = \cbrac{Z = \Psi(\ap,t) \suchthat \ap\in \Rsp}$ is a Jordan curve for all $t \in [0,T_0]$, thereby giving rise to a solution of the Euler equation \eqref{eq:Euler} with $(\vboldbar,P) = (F\compose\Psiinv, \Pfrak \compose \Psiinv)$ on $\Omega(t) = \cbrac{\Psi(\zp,t) \suchthat  \zp \in \Pminus}$. Then for every fixed $t\in[0,T_0]$, the quantities $\vbold(\cdot,t), \vbold_t(\cdot,t), \grad\vbold(\cdot,t) = (\vbold_x, \vbold_y)(\cdot,t)$ and $\grad P(\cdot,t)$ extend continuously to $\Omegabar(t)$. Moreover the Euler equation 
\begin{align*}
 \mathbf{v_t + (v.\nabla)v} = -i -\nabla P  \qq\text{ on } \Omega (t)
\end{align*}
holds on $\Omegabar(t)$, with $\grad \vbold$ and $\grad P$ vanishing at the singularities $\cbrac{\Z(\ap,t)\suchthat \ap \in S(t)} \subset \Sigma(t)$. Hence the acceleration $ \mathbf{v_t + (v.\nabla)v} $ is equal to $-i$ at the singularities.
\end{cor}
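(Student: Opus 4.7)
The plan is to transfer the continuity statements in \thmref{thm:mainangle} and \thmref{thm:gradP}, which live on $\Pminusbar$, over to $\Omegabar(t)$ by composing with the Riemann map. The crucial structural input is the Jordan-curve hypothesis on $\Sigma(t)$: by Carath\'eodory's extension theorem, the conformal map $\Psi(\cdot,t)$ extends to a homeomorphism $\Pminusbar \to \Omegabar(t)$, whose inverse $\Psiinv(\cdot,t)$ is continuous on $\Omegabar(t)$. From this, continuity of $\vbold(\cdot,t) = \overline{\F(\cdot,t)}\compose \Psiinv(\cdot,t)$ and $P(\cdot,t) = \Pfrak(\cdot,t)\compose \Psiinv(\cdot,t)$ on $\Omegabar(t)$ is immediate, using the regularity of $\F$ and $\Pfrak$ recorded in \thmref{thm:Wuexistence}.

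For the gradients, I would exploit that $\F$, $\Psi$, and $\vboldbar$ are all holomorphic: the chain rule together with the Cauchy--Riemann equations for $\Psiinv$ yields, in the interior of $\Omega(t)$,
\begin{align*}
(\px - i\py)\vboldbar = 2\brac{\frac{\Fzp}{\Psizp}}\compose \Psiinv, \qquad (\px - i\py) P = \brac{\onePsizp (\pxp - i\pyp)\Pfrak}\compose \Psiinv,
\end{align*}
and since $P$ is real the second identity determines $\grad P$ completely. By \thmref{thm:mainangle}(1), $\onePsizp\Fzp$ extends continuously to $\Pminusbar$ and vanishes on $S(t)$; by \thmref{thm:gradP}, $\onePsizp(\pxp - i\pyp)\Pfrak$ extends continuously to $\Pminusbar$, and since that theorem also gives $\Zttbar(\ap,t) = i$ for $\ap \in S(t)$, this quantity too vanishes on $S(t)$. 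Composing with the continuous map $\Psiinv$ yields the continuous extension of $\grad \vbold(\cdot,t)$ and $\grad P(\cdot,t)$ to $\Omegabar(t)$ and their vanishing at each singularity $\Z(\ap,t)$ with $\ap \in S(t)$.

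Finally, the Euler equation $\vbold_t + (\vbold\cdot\grad)\vbold = -i - \grad P$ is known from \cite{Wu15} to hold classically in the interior $\Omega(t)$; rearranging it realizes $\vbold_t$ as a combination of terms already shown to extend continuously to $\Omegabar(t)$, so $\vbold_t$ does too and the equation holds on the closure. At a singularity both $\grad \vbold$ and $\grad P$ vanish, so the convective term $(\vbold\cdot\grad)\vbold$ is zero and the equation reduces to $\vbold_t = -i$ --- the acceleration is pure gravity --- which is consistent with, and essentially the Eulerian rephrasing of, the identity $\Zttbar(\ap,t) = i$ on $S(t)$ already supplied by \thmref{thm:gradP}. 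The corollary is therefore a repackaging of Theorems~\ref{thm:mainangle} and~\ref{thm:gradP}, with no single serious obstacle; the one delicate point is the appeal to Carath\'eodory's theorem to extend $\Psiinv$ continuously to $\Omegabar(t)$, which relies essentially on $\Sigma(t)$ being a Jordan curve and would fail in its absence.
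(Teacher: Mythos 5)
Your proposal is correct and follows essentially the same route as the paper's proof: transfer the boundary regularity statements of Theorems~\ref{thm:mainangle} and~\ref{thm:gradP} across the Riemann map, use holomorphy of $\vboldbar$ to reduce $\grad\vbold$ to $\onePsizp\Fzp$, and recover $\vbold_t$ from the Euler equation (equivalently, from \eqref{eq:EulerRiem}) once the other terms are known to extend. The one small added value in your write-up is that you make explicit the appeal to Carath\'eodory's theorem to extend $\Psiinv(\cdot,t)$ continuously to $\Omegabar(t)$ under the Jordan-curve hypothesis, which the paper uses implicitly.
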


Hence these results in essence say that an initial interface with angled crests stays angled crested, the particle at the tip stays at the tip, the angle doesn't change, the angle doesn't tilt and the acceleration at the tip is the one due to gravity. In particular we can now say that the singularity is "rigid". This also gives a complete description of the dynamics near the singularities as long as the energy $\Ecalone(t) $ remains finite.

\section{Proofs}\label{sec:proof}

\begin{proof}[Proof of \lemref{lem:propofsing}]
Observe that
\begin{align*}
\pt\brac{ \frac{h_\al^\ep}{z_\al^\ep} } =  \frac{h_\al^\ep}{z_\al^\ep} \brac{ \frac{h_{t\al}^\ep}{h_\al^\ep}  -  \frac{z_{t\al}^\ep}{z_\al^\ep} }
\end{align*}
Hence we have
\begin{align}\label{eq:zapep}
 \frac{h_\al^\ep}{z_\al^\ep} (\al,t) =  \frac{h_\al^\ep}{z_\al^\ep} (\al,0) \exp\cbrac{\int_0^t \brac{ \frac{h_{t\al}^\ep}{h_\al^\ep}  -  \frac{z_{t\al}^\ep}{z_\al^\ep} }(\al,s) \diff s}
\end{align}
Hence from \eqref{ineq:quantE} we see that there exists $c_1,c_2 >0$ depending only on $\Ecalone(0)$ and $T_0$ such that
\begin{align*}
c_1\abs{ \frac{h_\al^\ep}{z_\al^\ep} (\al,0)} \leq \abs{ \frac{h_\al^\ep}{z_\al^\ep} (\al,t)} \leq c_2\abs{ \frac{h_\al^\ep}{z_\al^\ep} (\al,0)} \qq \tx{ for all } \al \in \Rsp, t\in [0,T_0]
\end{align*}
Recall that $\frac{\halep}{\zalep} = \oneZapep\compose \hep$ and $\hep(\al,0) = \al$. Hence
\begin{align*}
c_1\abs{\oneZapep(\al,0)} \leq \abs{\oneZapep(\hep(\al,t),t)} \leq c_2\abs{\oneZapep(\al,0)}
\end{align*}
As $\oneZapep(\ap,t) = \onePsizpep(\ap,t)$, letting $\ep \to 0$ we see that for all $\al \in \Rsp$ and $t\in [0,T_0]$ we have
\begin{align*}
c_1\abs{\onePsizp(\al,0)} \leq \abs{\onePsizp(\h(\al,t),t)} \leq c_2\abs{\onePsizp(\al,0)}
\end{align*}
which proves the lemma.
\end{proof}

\begin{lem}\label{lem:ctsextension}
Let $g:\Rsp\to \Csp$ be a continuous function and let $C>0$ be a constant. Let $A\subset \Rsp$ be a set of full measure and let $f:A\to \Csp$ be such that
\begin{align*}
\abs{f(x)-f(y)} \leq C\abs{g(x)-g(y)} \qq \tx{ for all } x,y\in A
\end{align*}
Then there exists a unique continuous function $\widetilde{f}: \Rsp\to\Csp$  such that $\widetilde{f}\vert_A = f$
\end{lem}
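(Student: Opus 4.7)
This is a standard density/extension argument rather than a substantive result; the one point needing care is that the hypothesis is a Lipschitz-type bound in which $g$ plays the role of a modulus rather than a genuine metric, so I must transfer Cauchy-ness through $g$. The main structural ingredient is that since $A \subset \Rsp$ has full Lebesgue measure, it intersects every nonempty open interval and is therefore dense in $\Rsp$.

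The plan is to construct $\widetilde{f}$ pointwise via Cauchy sequences. Fix $x \in \Rsp$ and pick any sequence $\{x_n\} \subset A$ with $x_n \to x$. Continuity of $g$ yields $g(x_n) \to g(x)$ in $\Csp$, so $\{g(x_n)\}$ is Cauchy; the assumed inequality
\[
\abs{f(x_n) - f(x_m)} \leq C\abs{g(x_n) - g(x_m)}
\]
then forces $\{f(x_n)\}$ to be Cauchy in $\Csp$, and I would define $\widetilde{f}(x) := \lim_{n\to\infty} f(x_n)$.

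Next I would check that this definition is independent of the chosen sequence: given a second sequence $\{y_n\} \subset A$ with $y_n \to x$, the interleaved sequence $x_1, y_1, x_2, y_2, \ldots$ also lies in $A$ and converges to $x$, and the same Cauchy argument applied to it forces $\lim_n f(x_n) = \lim_n f(y_n)$. For $x \in A$ the constant sequence $x_n \equiv x$ shows $\widetilde{f}(x) = f(x)$, so $\widetilde{f}\vert_A = f$.

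Finally, for continuity and uniqueness, I would pass the hypothesis to the limit: given arbitrary $x,y \in \Rsp$ and sequences $x_n \to x$, $y_n \to y$ in $A$, taking $n \to \infty$ in the bound gives
\[
\abs{\widetilde{f}(x) - \widetilde{f}(y)} = \lim_{n} \abs{f(x_n) - f(y_n)} \leq C \lim_n \abs{g(x_n) - g(y_n)} = C\abs{g(x) - g(y)}.
\]
Combined with continuity of $g$ on $\Rsp$, this immediately yields continuity of $\widetilde{f}$. Uniqueness is then automatic: any two continuous extensions of $f$ must agree on the dense set $A$, hence on all of $\Rsp$.
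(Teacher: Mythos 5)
Your proof is correct and follows essentially the same route as the paper's: construct $\widetilde{f}$ pointwise as a limit along sequences in the dense set $A$, verify well-definedness, pass the inequality $\abs{f(x)-f(y)} \le C\abs{g(x)-g(y)}$ to the limit to get continuity, and invoke density for uniqueness. The only cosmetic difference is that you establish well-definedness via the interleaved-sequence trick while the paper applies the hypothesis directly to the two sequences; both are equivalent.
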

\begin{proof}
Clearly the constant $C$ can be absorbed into the function $g$ and so without loss of generality we assume $C=1$. As $A$ is a set of full measure,  $A$ is dense in $\Rsp$. Fix $x\in\Rsp$ and choose a sequence $(x_n) $ with $x_n\in A$ and $x_n \to x$. Now
\begin{align*}
\abs{f(x_n)-f(x_m)} \leq \abs{g(x_n)-g(x_m)}
\end{align*}
and hence $\cbrac{f(x_n)}$ is a Cauchy sequence. Define $\widetilde{f}(x) = \lim_{n\to\infty} f(x_n)$. We easily see that $\widetilde{f} $ is well defined as if $(x_n') $ is another sequence with $x_n' \in A$ and $x_n' \to x$, then 
\begin{align*}
\abs{f(x_n)-f(x_n')} \leq \abs{g(x_n)-g(x_n')} \to 0 \quad \tx{ as } n\to \infty
\end{align*}
From this we also see that $\widetilde{f}\vert_A = f$ and 
\begin{align*}
\abs*{\widetilde{f}(x)-\widetilde{f}(y)} \leq \abs{g(x)-g(y)} \qq \tx{ for all } x,y\in \Rsp
\end{align*}
Hence $\widetilde{f}$ is a continuous function and $\widetilde{f}$ is unique, as a continuous function is determined by its values on a dense set.
\end{proof}

\begin{lem}\label{lem:unifconvPminusbar}
Let $f_n:\Rsp \to \Csp$ be a sequence of smooth functions. Let $1<p\leq \infty$ and suppose that there exists a constant $C >0$ independent of $n$, such that $\norm[\Linfty]{f_n} + \norm[L^p]{\px f_n} \leq C$. Then there exists a continuous and bounded function $f:\Rsp \to \Csp$ and a subsequence $\cbrac{f_{n_j}}$ such that $K_\yp\conv f_{n_j} \to K_\yp\conv f$ uniformly on compact subsets of $\Pminusbar$.
\end{lem}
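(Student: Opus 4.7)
The plan is to upgrade the hypothesis $\norm[\Linfty]{f_n} + \norm[L^p]{\px f_n} \leq C$ to a uniform H\"older modulus for $\{f_n\}$, use this to obtain uniform equicontinuity of the Poisson extensions $u_n(\xp,\yp):=(K_\yp\conv f_n)(\xp)$ on all of $\Pminusbar$, and then conclude by two applications of Arzel\`a--Ascoli. For the first step, since $p>1$, H\"older's inequality applied to $\int_{\xp}^{\yp}\px f_n$ gives $|f_n(x)-f_n(y)|\leq C\abs{x-y}^{\sigma}$ with $\sigma=1-1/p\in(0,1]$ (the case $p=\infty$ being Lipschitz). Together with $\norm[\Linfty]{f_n}\leq C$ this makes $\{f_n\}$ equicontinuous and uniformly bounded on $\Rsp$, so a diagonal Arzel\`a--Ascoli argument produces a subsequence $\{f_{n_j}\}$ converging uniformly on compact subsets of $\Rsp$ to a continuous bounded $f$ that inherits the same H\"older bound.

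The heart of the proof is showing that, after extending $u_n$ by $u_n(\xp,0):=f_n(\xp)$ on the boundary, the family $\{u_n\}$ is uniformly equicontinuous on compact subsets of $\Pminusbar$. In the interior region $\{\yp\leq -y_\ast<0\}$ this is straightforward: $\pxp K_\yp$ and $\pyp K_\yp$ have $\Lone(\Rsp,\diff\xp)$-norms uniformly bounded for $\yp\leq -y_\ast$, so $\grad u_n$ is uniformly controlled there. The delicate step is equicontinuity up to $\partial\Pminus$. Using $\int K_\yp(\xp-t)\diff t = 1$ one writes
\[
u_n(\xp,\yp) - f_n(x_0) \;=\; \int K_\yp(\xp-t)\bigl(f_n(t)-f_n(x_0)\bigr)\diff t,
\]
and then the H\"older bound for $f_n$ together with the scaling identity $\int K_\yp(\xp-t)\abs{t-\xp}^{\sigma}\diff t = c_\sigma\abs{\yp}^{\sigma}$ (finite since $\sigma<1$; in the Lipschitz case $\sigma=1$ one splits near/far from $\xp$ and uses $\norm[\Linfty]{f_n}\leq C$ on the far piece) yields $|u_n(\xp,\yp)-f_n(x_0)|\leq C\bigl(\abs{\yp}^\sigma+\abs{\xp-x_0}^\sigma\bigr)$ uniformly in $n$. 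This is the main obstacle of the argument: without a uniform modulus of continuity on $\{f_n\}$ the Poisson extensions could oscillate arbitrarily as $\yp\to 0^-$, which is precisely why the hypothesis $p>1$ is needed.

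Finally, combining the max-principle bound $\norm[\Linfty]{u_n}\leq\norm[\Linfty]{f_n}\leq C$ with the equicontinuity from the previous step, a second Arzel\`a--Ascoli extraction (diagonal over an exhaustion of $\Pminusbar$ by compact sets) gives a further subsequence of $\{u_{n_j}\}$ converging uniformly on compact subsets of $\Pminusbar$ to some continuous $\widetilde u:\Pminusbar\to\Csp$. Identifying $\widetilde u=K_\yp\conv f$ is routine: for each fixed $\yp<0$, dominated convergence using $K_\yp\in\Lone(\Rsp)$ and $\norm[\Linfty]{f_{n_j}}\leq C$ together with the pointwise limit $f_{n_j}\to f$ yields $u_{n_j}(\xp,\yp)\to(K_\yp\conv f)(\xp)$, while on $\partial\Pminus$ one has $u_{n_j}(\xp,0)=f_{n_j}(\xp)\to f(\xp)$. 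Thus $\widetilde u$ agrees with the Poisson extension of $f$ everywhere on $\Pminusbar$, which completes the proof.
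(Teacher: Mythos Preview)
Your proof is correct and follows essentially the same approach as the paper: use the hypothesis $p>1$ to obtain a uniform H\"older modulus for $\{f_n\}$, apply Arzel\`a--Ascoli to extract a subsequence $f_{n_j}\to f$ uniformly on compact subsets of $\Rsp$, and then pass to the Poisson extensions on $\Pminusbar$. The paper's own proof is a one-line sketch that cites Arzel\`a--Ascoli together with the fact that locally uniform convergence of bounded boundary data implies locally uniform convergence of the Poisson extensions on $\Pminusbar$; your write-up simply fleshes this out.

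The only minor organizational difference is in the second step: the paper suggests proving directly that $K_\yp\conv(f_{n_j}-f)\to 0$ uniformly on compact subsets of $\Pminusbar$ (which one does by splitting the convolution integral into a near part, controlled by $\sup_{|t|\leq R}|f_{n_j}-f|$, and a far part, controlled by the uniform $\Linfty$ bound and the tail of $K_\yp$), whereas you instead establish equicontinuity of the family $\{K_\yp\conv f_n\}$ on $\Pminusbar$ and invoke Arzel\`a--Ascoli a second time before identifying the limit. Both routes are valid; the direct estimate is slightly shorter and avoids the need for a second compactness extraction, but your approach has the virtue of making the role of the H\"older modulus near the boundary completely explicit.
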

\begin{proof}
This is an easy consequence of the Arzela-Ascoli theorem and the fact that if $ f_{n_j} \to  f$ uniformly on compact subsets of $\Rsp$, then $K_\yp\conv f_{n_j} \to K_\yp\conv f$ uniformly on compact subsets of $\Pminusbar$.
\end{proof}

\begin{lem}\label{lem:Linftybound}
Let $(\F,\Psi,\Pfrak)$ be a solution in $[0,T_0]$ as given by \thmref{thm:Wuexistence}. Then for each $t\in [0,T_0]$ we have
\begin{align*}
\sup_{\yp<0}\norm[\Linfty(\Rsp,\diff \xp)]{\frac{1}{\Psizp}\F_\zp(\xp+i\yp,t)} + \sup_{y'<0}\norm[\Linfty(\Rsp,\diff \xp)]{\partial_{z'}\brac{\frac{1}{\Psizp^2}}(\xp+i\yp,t)} \leq C(\Ecal_1(t))
\end{align*}
\end{lem}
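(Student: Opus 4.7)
Both target quantities, $\frac{1}{\Psizp}\F_\zp$ and $\pzp\brac{\frac{1}{\Psizp^2}} = \frac{2}{\Psizp}\pzp\brac{\frac{1}{\Psizp}}$, are holomorphic on $\Pminus$: $\Psi(\cdot,t)$ is conformal so $\Psizp$ is holomorphic and non-vanishing in $\Pminus$, and $\F(\cdot,t)$ is holomorphic, so products and derivatives of these remain holomorphic. Both also vanish at infinity, since $\Psizp \to 1$ and $\Fzp \to 0$. The plan is to invoke the maximum modulus principle (for bounded holomorphic functions on $\Pminus$ that are continuous up to $\partial\Pminus$ and vanish at infinity) to reduce the $L^\infty(\Pminus)$ norms in the statement to $L^\infty(\Rsp)$ norms of the boundary traces.

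\textbf{Boundary bounds via smoothed solutions.} I would then pass to the smoothed solutions $(\F^\ep,\Psi^\ep)$ from the proof of \thmref{thm:Wuexistence}, where everything is smooth up to the boundary and \eqref{ineq:quantE}--\eqref{ineq:quantE2} furnish $\ep$-uniform bounds on $\norm[\infty]{\oneZapep}$, $\norm[2]{\pap\oneZapep}$, $\norm[2]{\Ztapep}$, $\norm[2]{\pap\brac{\frac{1}{(\Zapep)^2}\Ztapep}}$ and $\norm[\infty]{\Aone^\ep}$. On $\partial\Pminus$ the boundary traces of the two holomorphic functions are $\oneZapep\Ztapepbar$ and $\pap\brac{\frac{1}{(\Zapep)^2}}$, respectively. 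Combining the above bounds with the one-dimensional Gagliardo--Nirenberg inequality $\norm[\infty]{f}^2 \leq C\norm[2]{f}\norm[2]{\pap f}$, I aim to derive $\ep$-uniform $L^\infty$ bounds on these boundary traces; the bounds then transfer to the singular solution via the uniform convergences $\onePsizpep\Fzpep \Rightarrow \onePsizp\Fzp$ and $\pzp\brac{\frac{1}{\Psizpep^2}} \Rightarrow \pzp\brac{\frac{1}{\Psizp^2}}$ on compact subsets of $\Pminus$ established in items (d) and (e) of \thmref{thm:Wuexistence}.

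\textbf{Main obstacle.} The principal technical point is that the derivative-type bounds in $\Ecalone$ (equivalently $\Ecal$) carry weights of the form $\oneZapep$ that vanish at singularities in the limit $\ep \to 0$: one controls $\norm[2]{\oneZapep\pap(\oneZapep\Ztapepbar)}$ and $\norm[2]{\oneZapep\pap(\oneZapep\pap\oneZapep)}$ rather than the unweighted $L^2$ norms of the derivatives themselves. Consequently Gagliardo--Nirenberg cannot be applied directly to $\oneZapep\Ztapepbar$ or $\pap\brac{\frac{1}{(\Zapep)^2}}$. The hard part will be removing these weights while preserving $\ep$-uniformity; I expect to do so by exploiting the product structure, e.g.\ decomposing $\pap(\oneZapep\Ztapepbar) = (\pap\oneZapep)\Ztapepbar + \oneZapep\,\pap\Ztapepbar$ and controlling each piece by combining the $L^2$ bound on $\pap\oneZapep$ with an $L^\infty$ bound on $\frac{\Ztapep}{(\Zapep)^2}$ (obtained via Gagliardo--Nirenberg from its $H^1$-type control in \eqref{ineq:quantE}) and the $L^\infty$ bound on $\oneZapep$; the second quantity is handled analogously using the seventh term of $\Ecalone$.
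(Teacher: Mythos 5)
Your framing of the obstacle is right, but your proposed remedy does not close the gap. Differentiating $\oneZapep\Ztapepbar$ by the product rule gives $(\pap\oneZapep)\Ztapepbar + \oneZapep\pap\Ztapepbar$, and neither piece has an $\ep$-uniform $L^2$ bound from \eqref{ineq:quantE}--\eqref{ineq:quantE2}: the first needs $\Ztapepbar$ in $L^\infty$ (not in the energy; and rewriting it as $(\pap\oneZapep)\Zapep^2\cdot\frac{\Ztapepbar}{\Zapep^2}$ produces the uncontrolled factor $(\pap\oneZapep)\Zapep^2 = -\pap\Zapep$), while the second needs $\pap\Ztapepbar$ in $L^2$, again not in the energy. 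Your auxiliary $L^\infty$ bound on $\frac{\Ztapep}{(\Zapep)^2}$ is correct (it does follow by Gagliardo--Nirenberg from the sixth term of \eqref{ineq:quantE}), but it cannot be multiplied into either piece without producing a stray power of $\Zapep$. So "removing the weights" in the way you describe does not work, and indeed the unweighted $L^2$ bound on $\pap(\oneZapep\Ztapepbar)$ is not what the energy controls.

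The paper sidesteps all of this by never passing to the smoothed solutions and never going to the boundary. It works directly on each horizontal line $\yp = \text{const} < 0$, where $\onePsizp\Fzp(\cdot,t)$ is already smooth and tends to $0$ at $\pm\infty$, and applies the fundamental theorem of calculus to the \emph{square} of the target function:
\begin{align*}
\abs*{\brac{\onePsizp\Fzp}^2(\al_2+i\yp,t) - \brac{\onePsizp\Fzp}^2(\al_1+i\yp,t)} \leq 2\int_{\al_1}^{\al_2}\abs{\Fzp}\,\abs{\onePsizp\pzp\brac{\onePsizp\Fzp}}(s+i\yp,t)\diff s,
\end{align*}
because $\pzp\brac{(\onePsizp\Fzp)^2} = 2\Fzp\cdot\onePsizp\pzp(\onePsizp\Fzp)$ automatically redistributes the $\onePsizp$ factors so that both $\Fzp$ and $\onePsizp\pzp(\onePsizp\Fzp)$ are $L^2$-controlled by $\Ecalone(t)$. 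Cauchy--Schwarz and sending $\al_1\to-\infty$ give the $L^\infty$ bound on that line, uniformly in $\yp$; the second quantity is handled the same way using the sixth and seventh terms of $\Ecalone(t)$. In effect this is the Gagliardo--Nirenberg step you intended, but applied to $f^2$ rather than $f$ — that single change is what makes the weights match the energy. Your boundary/smoothing route could in principle be repaired by applying the same $f^2$ identity on $\partial\Pminus$ for the $\ep$-solutions, but then it becomes the paper's argument with an extra limiting step, and it would only yield a constant depending on $c_0$ and $\Ecalone(0)$ rather than the cleaner $C(\Ecalone(t))$ the lemma actually asserts.
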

\begin{proof}
We note that for any $t\in[0,T_0]$ we have
\begin{align*}
 \sup_{\yp <0}\norm[\Ltwo(\Rsp,\diff \xp)]{\frac{1}{\Psizp(\xp+i\yp,t)} - 1}^2 + \sup_{\yp<0}\norm[\Ltwo(\Rsp, \diff \xp)]{\partial_\zp \brac{\frac{1}{\Psizp}}(\xp+i\yp,t)}^2 < \infty
\end{align*}
Hence for any fixed $\yp<0$ we have $\onePsizp(\xp + i\yp,t) \to 1$ as $\abs{\xp} \to \infty$ . Hence from the energy $\Ecalone(t)$ we see that for any fixed $\yp<0$, there exists $N>0$ large enough so that
\begin{align*}
\norm[\Ltwo(\Rsp, \diff \xp)]{\frac{1}{\Psizp}\Fzp(\xp+i\yp,t)}^2+ \norm[\Ltwo(\Rsp \backslash [-N,N], \diff \xp)]{\pzp\brac{\frac{1}{\Psizp}\Fzp}(\xp+i\yp,t)}^2 < \infty
\end{align*}
Hence for any fixed $\yp<0$, we see that $\frac{1}{\Psizp}\Fzp(\xp+i\yp,t) \to 0$ as $\abs{\xp} \to \infty$. Now observe that for $\yp<0$ we have 
\begin{align}
& \abs{\brac{\frac{1}{\Psizp}\F_\zp}^2(\al_2+i\yp,t) - \brac{\frac{1}{\Psizp}\F_\zp}^2(\al_1+i\yp,t)} \\
&\leq 2\int_{\al_1}^{\al_2} \abs{\F_\zp}\abs{\frac{1}{\Psizp}\pzp\brac{\frac{1}{\Psizp}\F_\zp}}(s+i\yp,t) \diff s
\end{align}
Hence we have $\dis \sup_{\yp<0}\norm[\Linfty(\Rsp,\diff \xp)]{\frac{1}{\Psizp}\F_\zp(\xp+i\yp,t)} \leq C(\Ecal_1(t))$. A similar argument shows that $\dis \sup_{y'<0}\norm[\Linfty(\Rsp,\diff s)]{\partial_{z'}\brac{\frac{1}{\Psizp^2}}(\xp+i\yp,t)} \leq C(\Ecal_1(t))$
\end{proof}

From the heuristics, we know that the energy $\Ecalone(t) < \infty$ allows angles $\nu\pi$ for $0<\nu<\half$. Also for such interfaces $\Zap (\ap) \sim (\ap)^{\nu-1}$ near $\ap=0$, and hence $\Zap$ does not belong in $\Ltwo$. We now give a more general argument of this heuristic which will be required to prove \thmref{thm:mainangle}. 
\begin{lem}\label{lem:Psiblowup}
Let $(\F,\Psi,\Pfrak)$ be a solution in $[0,T_0]$ as given by \thmref{thm:Wuexistence}. Fix a $t \in [0,T_0]$ and let $\ap \in S(t)$. Then for all $\delta>0$ we have
\begin{align*}
\lim_{\yp \to 0^-}\int_{\al'-\delta}^{\al'+\delta}\abs{\Psizp}^2(s + iy',t)\diff s = \infty 
\end{align*}
\begin{proof}
Without loss of generality we assume $\al'=0$. We have for $y'<0$
\begin{align*}
\frac{1}{\Psizp^2}(\xp+i\yp,t) = \frac{1}{\Psizp^2}(0+i\yp,t) + \int_0^\xp \partial_{z'}\brac{\frac{1}{\Psizp^2}}(s+iy',t)\diff s
\end{align*}
From \lemref{lem:Linftybound} we set $\dis C = \sup_{y'<0}\norm[\Linfty(\Rsp,\diff s)]{\partial_{z'}\brac{\frac{1}{\Psizp^2}}(s+i\yp,t)} <\infty$. Hence we have
\begin{align*}
\frac{1}{\abs{\Psizp}^2}(\xp+ i\yp,t) \leq \frac{1}{\abs{\Psizp}^2}(0+i\yp,t) + C\abs{\xp} \quad \tx{ for all } \xp\in\Rsp, \yp <0
\end{align*}
From this we see that for $y'<0$
\begin{align*}
\int_{-\delta}^{\delta}\abs{\Psizp}^2(s+i\yp,t)\diff s \geq \int_{-\delta}^{\delta}\frac{1}{\nobrac{\frac{1}{\abs{\Psizp}^2}(0+i\yp,t)} + C\abs{s}} \diff s
\end{align*}
As $\frac{1}{\Psizp}(\cdot,t)$ is continuous on $\Pminusbar$ and by assumption $\frac{1}{\Psizp}(0,t) = 0$, we have that $\frac{1}{\Psizp^2}(0+iy',t) \to 0$ as $y'\to 0^-$ proving the lemma.
\end{proof}
\end{lem}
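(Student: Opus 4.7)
The plan is to exploit the key regularity bound from Lemma \ref{lem:Linftybound}, namely that $\partial_{\zp}(1/\Psizp^2)$ is bounded in $L^\infty(\Rsp,\diff\xp)$ uniformly in $\yp<0$, together with the fact that the continuous extension $\frac{1}{\Psizp}(\ap,t)$ of $\frac{1}{\Psizp}(\cdot,t)$ to $\Pminusbar$ vanishes at $\ap \in S(t)$ by the definition of the singular set. These two pieces of information together should force a logarithmic-type blow-up of $\int |\Psizp|^2\,\diff s$.

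By translation invariance reduce to $\ap=0$. First I would fix $\yp<0$ and integrate the bounded holomorphic derivative $\partial_{\zp}(1/\Psizp^2)$ along the horizontal segment from $(0,\yp)$ to $(\xp,\yp)$ to obtain a pointwise estimate
\[
\frac{1}{|\Psizp|^2}(\xp+i\yp,t) \;\leq\; \frac{1}{|\Psizp|^2}(i\yp,t) + C|\xp|,
\]
with $C$ independent of $\yp$ thanks to Lemma \ref{lem:Linftybound}. Taking reciprocals gives the lower bound $|\Psizp|^2(\xp+i\yp,t) \geq \bigl(\tfrac{1}{|\Psizp|^2}(i\yp,t) + C|\xp|\bigr)^{-1}$.

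Next I would integrate this lower bound over $\xp\in(-\delta,\delta)$. The resulting integral is $\int_{-\delta}^{\delta}\frac{\diff s}{a(\yp)+C|s|}$ with $a(\yp) = \frac{1}{|\Psizp|^2}(i\yp,t)$, a purely elementary integral equal to $\tfrac{2}{C}\log\!\bigl(1+C\delta/a(\yp)\bigr)$. Since $\frac{1}{\Psizp}(\cdot,t)$ is continuous on $\Pminusbar$ and vanishes at the chosen singular point, we have $a(\yp)\to 0$ as $\yp\to 0^-$, so the logarithm diverges to $+\infty$, which is exactly the required conclusion.

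The only mild obstacle is making sure that the $L^\infty$ bound on $\partial_{\zp}(1/\Psizp^2)$ is truly uniform in $\yp<0$ (so that the constant $C$ in the first step does not degenerate as $\yp\to 0^-$); this is precisely what Lemma \ref{lem:Linftybound} supplies, so the argument goes through without further ingredients.
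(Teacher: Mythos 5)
Your proposal is correct and follows essentially the same path as the paper's own proof: integrate the uniformly bounded $\partial_{\zp}(1/\Psizp^2)$ along a horizontal segment to get $\frac{1}{\abs{\Psizp}^2}(\xp+i\yp,t) \leq \frac{1}{\abs{\Psizp}^2}(i\yp,t) + C\abs{\xp}$, take reciprocals and integrate over $(-\delta,\delta)$, and let $\yp\to 0^-$ using continuity of $1/\Psizp$ up to the boundary and its vanishing at the singular point. Your explicit evaluation of the integral as $\tfrac{2}{C}\log\bigl(1+C\delta/a(\yp)\bigr)$ is a small clarification the paper leaves implicit, but the argument is the same.
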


\medskip
\begin{proof}[Proof of \thmref{thm:mainangle}]
In steps 1-3 we fix time $t\in [0,T_0]$ and we prove that $\onePsizp\Fzp(\cdot,t)$ extends continuously to $\Pminusbar$ and that it vanishes on $S(t)$. From this, the result for $\onePsizp\Fzpbar(\cdot,t)$ follows easily. In steps 4-6 we prove the evolution formula of the unit tangent vector of the interface. 
\medskip

\noindent \textbf{Step 1}: 
As $\onePsizp\Fzp(\cdot,t)$ is a bounded holomorphic function from \lemref{lem:Linftybound}, by Fatou's theorem there exists $f\in \Linfty(\Rsp)$ and a set $A\subset \Rsp$ of full measure such that for $\yp<0$ we have
\begin{align*}
\brac{\frac{1}{\Psizp}\F_\zp} (\cdot + i\yp,t) &= K_\yp \conv f \\
 \tx{ and }\quad  \brac{\frac{1}{\Psizp}\F_\zp} (\ap+i\yp,t) &\to f(\ap) \quad \tx{ as } \yp\to 0 \quad \tx{ for all } \ap\in A 
\end{align*}
Also as $\dis \sup_{\yp<0}\norm[\Ltwo(\Rsp,\diff \xp)]{\F_\zp(\xp+i\yp,t)} < C(\Ecalone(t))$ and $\F_\zp(\cdot,t)$ is holomorphic, there exists $g_1 \in \Ltwo(\Rsp)$ such that 
\begin{align*}
\F_\zp(\cdot+i\yp,t) \to g_1\quad  \tx{ in } \Ltwo \quad \tx{ as } \yp\to 0 
\end{align*}
Similarly there exists $g_2 \in \Ltwo$ such that 
\begin{align*}
\frac{1}{\Psizp}\pzp\brac{\frac{1}{\Psizp }\F_\zp}(\cdot+i\yp,t) \to g_2 \quad \tx{ in } \Ltwo \quad \tx{ as } \yp \to 0
\end{align*}
Hence we see that
\begin{align*}
\abs{\F_\zp}\abs{\frac{1}{\Psizp}\pzp\brac{\frac{1}{\Psizp }\F_\zp}}(\cdot+i\yp,t) \to \abs{g_1}\abs{g_2} \quad \tx{ in } \Lone
 \quad \tx{ as } \yp\to 0 
\end{align*}
Define the function $h:\Rsp\to\Rsp$
\begin{align*}
h(\al) = \int_0^\al \abs{g_1}\abs{g_2}(s) \diff s
\end{align*}
Clearly $h$ is a continuous function on $\Rsp$. Now observe that for $\yp<0$ we have 
\begin{align*}
 \abs{\brac{\frac{1}{\Psizp}\F_\zp}^2(\al_2+i\yp,t) - \brac{\frac{1}{\Psizp}\F_\zp}^2(\al_1+i\yp,t)} \leq 2\int_{\al_1}^{\al_2} \abs{\F_\zp}\abs{\frac{1}{\Psizp}\pzp\brac{\frac{1}{\Psizp}\F_\zp}}(s+i\yp,t) \diff s
\end{align*}
Letting $\yp\to0$ we obtain
\begin{align*}
\abs{f^2(\al_2) - f^2(\al_1)} \leq 2\abs{h(\al_2) - h(\al_1)} \quad \tx{ for all } \al_1,\al_2 \in A
\end{align*}
Hence by \lemref{lem:ctsextension} there exists a continuous function $f_2:\Rsp\to\Csp$ such that $f_2\vert_{A} = f^2$. Also observe that
\begin{align*}
& \abs{\brac{\frac{1}{\Psizp}\F_\zp}^{ 3}(\al_2+i\yp,t) - \brac{\frac{1}{\Psizp}\F_\zp}^{3}(\al_1+i\yp,t)}  \\
&  \leq C(\Ecal_1(t)) \int_{\al_1}^{\al_2} \abs{\F_\zp}\abs{\frac{1}{\Psizp}\pzp\brac{\frac{1}{\Psizp}\F_\zp}}(s+i\yp,t) \diff s
\end{align*}
Hence via the same argument there exists a continuous function $f_3:\Rsp\to\Csp$ such that $f_3\vert_{A} = f^3$.

\medskip

\noindent \textbf{Step 2}: Define the function $\ftil:\Rsp \to \Csp$ 
\begin{align*}
\ftil(\ap) = 
\begin{cases} 
(f_3/f_2)(\ap) & \text{if } f_2(\ap) \neq 0 \\
0       & \text{otherwise } 
\end{cases}
\end{align*}
We claim that $\ftil $ is a continuous function on $\Rsp$ and $\ftil\vert_A = f$. 

First note that both $f_3$ and $f_2$ are continuous. Fix $\ap\in\Rsp$ and observe that if $f_2(\ap) \neq 0$, then $\ftil $ is continuous at $\ap$. Hence we need to prove the continuity of $\ftil$ at $\ap$ where $f_2(\ap) =0$. Define the function $\fabs:\Rsp \to \Rsp$ by $\fabs = \sqrt{\abs{f_2}}$. Observe that $\fabs$ is a continuous function on $\Rsp$ and that $\abs{f_3}(\ap) = \fabs^3(\ap)$ for all $\ap \in A$. As $A$ is a set of full measure and both $\abs{f_3}$ and $\fabs^3$ are continuous functions, we have $\abs{f_3}(\ap) = \fabs^3(\ap)$ for all $\ap \in \Rsp$. Hence we now see that $\abs*{\ftil(\ap)} \leq \fabs(\ap)$ for all $\ap\in\Rsp$ and this inequality is enough to prove continuity of $\ftil$ at all $\ap$ where $f_2(\ap)=0$. Hence $\ftil$ is continuous on $\Rsp$.

Recall that $f_2\vert_A = f^2$ and $f_3\vert_A = f^3$. If $\ap \in A$ and $f(\ap) \neq 0$, then $\ftil(\ap) = (f_3/f_2)(\ap) = f(\ap)$. If $\ap \in A$ and $f(\ap) = 0$, then we see that $f_2(\ap)=0$ and hence $\ftil(\ap) =0$. Hence $\ftil\vert_A = f$.

\medskip
\noindent \textbf{Step 3}: As $\ftil\vert_A = f$ and $A$ is a set of full measure we now have
\begin{align*}
\frac{1}{\Psizp}\F_\zp (\cdot+i\yp,t) &= K_\yp\conv\ftil \quad \tx{ for all } \yp<0
\end{align*}
As $f$ is bounded, we see that $\ftil $ is a continuous and bounded function, and hence $\frac{1}{\Psizp}\F_\zp(\cdot,t)$ extends continuously to $\Pminusbar$. Now let $\ap\in S(t)$ be a singular point. We proceed via contradiction and assume that $\frac{1}{\Psizp}\F_\zp(\ap,t) = c \neq 0$. Hence there exists $c_1,c_2, \delta>0$ so that 
\begin{align*}
0<c_1 \leq \abs{\frac{1}{\Psizp}\F_\zp}(s+i\yp,t) \leq c_2 < \infty \quad \tx{ for all } s\in(\ap-\delta,\ap+\delta) \tx{ and } -\delta\leq \yp<0
\end{align*}
Observe that for $\yp<0$ we have
\begin{align*}
\F_\zp(\ap+i\yp,t) = \Psizp(\ap+i\yp,t)\brac{\frac{1}{\Psizp}\F_\zp(\ap+i\yp,t)}
\end{align*}
and hence we obtain 
\begin{align*}
\abs{\F_\zp(s+i\yp,t)} \geq c_1\abs{\Psizp(s+i\yp,t)} \quad \tx{ for all } s\in(\ap-\delta,\ap+\delta) \tx{ and } -\delta\leq \yp<0
\end{align*}
By integrating we get
\begin{align*}
\int_{\ap-\delta}^{\ap+\delta} \abs{\F_\zp(s+i\yp,t)}^2 \diff s \geq c_1^2 \int_{\ap-\delta}^{\ap+\delta} \abs{\Psizp(s+i\yp,t)}^2 \diff s  \quad \tx{ for all } -\delta\leq \yp<0
\end{align*}
Letting $\yp\to 0^-$ and using \lemref{lem:Psiblowup} we see that
\begin{align*}
\lim_{\yp\to 0^-} \int_{\ap-\delta}^{\ap+\delta} \abs{\F_\zp(s+i\yp,t)}^2 \diff s =  \infty
\end{align*}
which contradicts the finiteness of the energy, $\dis \sup_{\yp<0} \norm[\Ltwo(\Rsp,\diff \xp)]{\F_\zp(\xp+i\yp,t)}^2 \leq \Ecalone(t) < \infty$

We have proven the result for $\frac{1}{\Psizp}\F_\zp$ and we now need to prove the result for $\frac{1}{\Psizp}\Fzpbar$. We observe that $\Psizp$ extends continuously to $\Pminusbar\backslash S(t)$ and hence the functions $\F_\zp$ and $\frac{1}{\Psizp}\Fzpbar$ extend continuously to  $\Pminusbar\backslash S(t)$. As $\abs{\frac{1}{\Psizp}\Fzpbar} = \abs{\frac{1}{\Psizp}\F_\zp}$ on $\Pminus$ and as $\frac{1}{\Psizp}\F_\zp$ extends continuously to $\Pminusbar$ and vanishes on $S(t)$, this forces $\frac{1}{\Psizp}\Fzpbar$ to extend continuously to $\Pminusbar$ and $\frac{1}{\Psizp}\Fzpbar(\ap,0) = 0$ for all $\ap \in S(t)$.

\medskip
\noindent \textbf{Step 4}: Observe that $\frac{1}{(\Zapep)^2}\Ztapepbar$ is the boundary value of the holomorphic function $\frac{1}{(\Psizpep)^2}\Fzpep$. From \eqref{ineq:quantE} and \eqref{ineq:quantE2} we see that for all $0<\ep\leq1$, we have the estimates
\begin{align*}
\norm[\infty]{\frac{1}{(\Zapep)^2}\Ztapepbar(\cdot,t)} \leq \norm[\infty]{\frac{1}{\Zapep}(\cdot,t)}\norm[\infty]{\frac{1}{\Zapep}{\Ztapepbar}(\cdot,t)} \leq C(c_0,\Ecalone(0))
\end{align*}
and also
\begin{align*}
\norm[2]{\pap\brac*[\bigg]{\frac{1}{(\Zapep)^2}\Ztapepbar}(\cdot,t)} \leq C(\Ecalone(0))
\end{align*}
Hence from \lemref{lem:unifconvPminusbar}, we see that there exists a sequence $\ep_j = \ep$ such that as $\ep\to 0$, the functions $\frac{1}{(\Psizpep)^2}\Fzpep(\cdot,t)$ converge on compact subsets of $\Pminusbar$ to a continuous and bounded function. But from the proof of \thmref{thm:Wuexistence} we already know that as $\ep\to 0$
\begin{align*}
\onePsizpep(\cdot,t) \Ra \onePsizp(\cdot,t) \qq \Fzpep(\cdot,t) \Ra \Fzp(\cdot,t) \qq \tx{ on } \Pminus
\end{align*}
Hence $\frac{1}{(\Psizp)^2}\Fzp(\cdot,t)$ extends continuously to $\Pminusbar$ and 
\begin{align*}
\frac{1}{(\Psizpep)^2}\Fzpep(\cdot,t) \Ra \frac{1}{(\Psizp)^2}\Fzp(\cdot,t) \qq \tx{ on } \Pminusbar
\end{align*}
As $\onePsizpep(\cdot,t) \Ra \onePsizp(\cdot,t) $ on $\Pminusbar$ and by the definition of $S(t)$ we have
\begin{align*}
\Psizpep(\cdot,t) \Ra \Psizp(\cdot,t) \qq \tx{ on } \Pminusbar\backslash S(t)
\end{align*}
Using this we see that as $\ep\to0$ we have
\begin{align*}
\frac{1}{\Psizpep}\Fzpep(\cdot,t) \Ra \frac{1}{\Psizp}\Fzp(\cdot,t) \qq \frac{1}{\Psizpep}\Fzpepbar(\cdot,t) \Ra \frac{1}{\Psizp}\Fzpbar(\cdot,t) \qq \tx{ on } \Pminusbar\backslash S(t)
\end{align*}

\medskip
\noindent \textbf{Step 5}:
From \eqref{eq:zapep} we see that
\begin{align*}
 \frac{h_\al^\ep}{z_\al^\ep} (\al,t) =  \frac{h_\al^\ep}{z_\al^\ep} (\al,0) \exp\cbrac{\int_0^t \brac{ \frac{h_{t\al}^\ep}{h_\al^\ep}  -  \frac{z_{t\al}^\ep}{z_\al^\ep} }(\al,s) \diff s}
\end{align*}
and hence by inverting and by using $\frac{\zal^\ep}{\hal^\ep}  = \Zap^\ep \compose h^\ep$ we get
\begin{align*}
\Zap^\ep(h^\ep(\al,t),t) = \Zap^\ep(\al,0)\exp\cbrac{\int_0^t \brac*[\Big]{\frac{\Ztap^\ep}{\Zap^\ep} - \bvarap^\ep}(h^\ep(\al,s),s)\diff s } 
\end{align*}
We see that $\frac{\Ztapep}{\Zapep}(\ap,t) = \brac{\onePsizpep\Fzpepbar}(\ap,t)$, and from this we obtain
\begin{align*}
\frac{\Zap^\ep}{\abs*{\Zap^\ep}}(h^\ep(\al,t),t) = \frac{\Zap^\ep}{\abs*{\Zap^\ep}}(\al,0)\exp\cbrac{i\Imag \brac{\int_0^t \brac{\onePsizpep\Fzpepbar}(h^\ep(\al,s),s)\diff s}} \quad \tx{ for all } \al \in \Rsp
\end{align*}
Recall that 
\begin{align*}
\frac{1}{\Psizp^\ep} \Rightarrow \frac{1}{\Psizp} \quad \tx{ on } \Pminusbar\times [0,T_0] \qq \tx{ and } h^\ep \Rightarrow h \quad \text{ on } \Rsp\times[0,T_0]
\end{align*}
and we have shown that for all $t\in[0,T_0]$
\begin{align*}
\Psizpep(\cdot,t) \Ra \Psizp(\cdot,t) \qq \frac{1}{\Psizpep}\Fzpepbar(\cdot,t) \Ra \frac{1}{\Psizp}\Fzpbar(\cdot,t) \qq \tx{ on } \Pminusbar\backslash S(t)
\end{align*}
Hence letting $\ep \to 0$, using dominated convergence theorem we see that for any $t\in[0,T_0]$
\begin{align*}
\frac{\Zap}{\Zapabs}(h(\al,t),t) = \frac{\Zap}{\Zapabs}(\al,0)\exp\cbrac{i\Imag \brac{\int_0^t \brac{\frac{1}{\Psizp}\Fzpbar}(h(\al,s),s)\diff s}} \quad \tx{ for all } \al \in NS(0)
\end{align*}
proving the evolution formula.

\medskip
\noindent \textbf{Step 6}:
Now if $\al_n \in NS(0)$, using the evolution formula we see that
\begin{align*}
\frac{\Zap}{\Zapabs}(h(\al_n,t),t) = \frac{\Zap}{\Zapabs}(\al_n,0)\exp\cbrac{i\Imag \brac{\int_0^t \brac{\frac{1}{\Psizp}\Fzpbar}(h(\al_n,s),s)\diff s}} 
\end{align*}
Now from step 3 we know that $\brac{\frac{1}{\Psizp}\Fzpbar}(\zp,t)$ is continuous in $\zp$ on $\Pminusbar$, and hence by dominated convergence theorem we see that 
\begin{align*}
\lim_{\al_n \to \al}\frac{\frac{\Zap}{\Zapabs}(h(\al_n,t),t)) }{\frac{\Zap}{\Zapabs}(\al_n,0)} = \exp\cbrac{i\Imag \brac{\int_0^t \brac{\frac{1}{\Psizp}\Fzpbar}(h(\al,s),s)\diff s}} 
\end{align*}
But from step 3 we also know that if $\al\in S(0)$, then $\brac{\frac{1}{\Psizp}\Fzpbar}(h(\al,s),s) = 0$ for all $s\in[0,T_0]$. Hence the result is proved.
\end{proof}

\medskip
\begin{proof}[Proof of \thmref{thm:gradP}]
We prove this theorem in steps.

\medskip
\noindent \textbf{Step 1}: From \eqref{form:Pfrak} we know that 
\begin{align*}
\Pfrak(\zp,t) = -\half\abs{\F(\zp,t)}^2 - y + \half K_\yp \conv (\abs{\Ztbar}^2)(\xp,t) \quad \tx{ on } \Pminus\times[0,T_0]
\end{align*}
As $2\pzp = (\pxp - i\pyp)$ we obtain
\begin{align*}
-\onePsizp(\pxp-i\pyp) \Pfrak =  \Fbar\frac{\Fzp}{\Psizpep} - \frac{i}{\Psizpep} - \onePsizp\pzp K_\yp\conv (\abs{\Ztbar}^2) \quad \tx{ on } \Pminus\times[0,T_0]
\end{align*}
Define  $\G:\Pminus\times[0,T_0] \to \Csp$ by
\begin{align*}
\G  = \onePsizp \pzp K_\yp\conv(\abs{\Ztbar}^2)
\end{align*}
Observe that as $K_\yp\conv(\abs{\Ztbar}^2)$ is a harmonic function, $\G$ is holomorphic. We obtain
\begin{align}\label{form:gradPfrak}
-\onePsizp(\pxp-i\pyp) \Pfrak =  \Fbar\frac{\Fzp}{\Psizp} - \frac{i}{\Psizp} - \G \quad \tx{ on } \Pminus\times [0,T_0]
\end{align} 
For $0<\ep\leq 1$ define $\Gep:\Pminusbar\times[0,T_0] \to \Csp$ by
\begin{align*}
\Gep & = \onePsizpep \pzp K_\yp\conv(\abs{\Ztbarep}^2) 
\end{align*}
hence $\Gep$ is holomorphic and we have
\begin{align}\label{eq:gradPep}
-\onePsizpep(\pxp-i\pyp) \Pfrakep =  \Fepbar\frac{\Fzpep}{\Psizpep} - \frac{i}{\Psizpep} - \Gep \quad \tx{ on } \Pminus\times[0,T_0]
\end{align}
From the proof of \thmref{thm:Wuexistence} we know that
\begin{align*}
(\pxp-i\pyp)\Pfrakep \Rightarrow (\pxp-i\pyp)\Pfrak \quad \tx{ on } \Pminus\times [0,T_0] \\
\Fep \Rightarrow \F \qq \Fzpep \Rightarrow \Fzp \qq \onePsizpep \Rightarrow \onePsizp \qq \tx{ on } \Pminus\times [0,T_0] 
\end{align*}
Hence we see that 
\begin{align*}
\Gep \Rightarrow \G \qq\tx{ on } \Pminus\times [0,T_0]
\end{align*}

\medskip
\noindent \textbf{Step 2}: Define the boundary value of $\Gep$ as $\gep:\Rsp\times[0,T_0] \to \Csp$ given by
\begin{align*}
\gep(\ap,t) = \Gep(\ap,t)
\end{align*}
Writing \eqref{eq:gradPep} on the boundary and using \eqref{eq:gradPbdry} we get
\begin{align*}
-i\frac{\Aone^\ep}{\Zapep} =  \Ztep\frac{\Ztapepbar}{\Zapep} -\frac{i}{\Zapep} - \gep
\end{align*}
Hence we see that
\begin{align}\label{eq:gep}
\gep = i\frac{\Aone^\ep}{\Zapep} + \Ztep\frac{\Ztapepbar}{\Zapep} - \frac{i}{\Zapep}
\end{align}
We obtain the following estimate from \eqref{ineq:quantE} and \eqref{ineq:quantE2}
\begin{align*}
\norm[\infty]{\gep(\cdot,t)} \leq \brac{\norm[\infty]{\Aone^\ep(\cdot,t)} + 1}\norm[\infty]{\oneZapep(\cdot,t)} + \norm[\infty]{\Ztep(\cdot,t)}\norm[\infty]{\frac{\Ztapepbar}{\Zapep}(\cdot,t)} \leq C(c_0,\Ecalone(0)) 
\end{align*}
As $\gep$ is the boundary value of the holomorphic function $\Gep$, we see that for all $0<\ep\leq 1$
\begin{align*}
\sup_{\yp<0}\norm[\Linfty(\Rsp,\diff \xp)]{\Gep(\xp+i\yp,t)} \leq C(c_0,\Ecalone(0)) 
\end{align*}
As $\Gep(\zp,t) \Ra \G(\zp,t)$ on $\Pminus\times[0,T_0]$, we see that $G(\cdot,t)$ is a bounded holomorphic function on $\Pminus$.

\medskip
\noindent \textbf{Step 3}: For $0<\ep\leq 1$ we have the estimates
\begin{align*}
\norm[\infty]{\frac{\gep}{\Zapep}(\cdot,t)} \leq \norm[\infty]{\gep(\cdot,t)}\norm[\infty]{\frac{1}{\Zapep}(\cdot,t)} \leq C(c_0,\Ecalone(0))
\end{align*}
and also 
\begin{align*}
& \norm[2]{\pap\brac*[\bigg]{\frac{\gep}{\Zapep}}(\cdot,t)} \\
& \leq 2\norm[2]{\brac*[\bigg]{\pap\frac{1}{\Zapep}}(\cdot,t)}\norm[\infty]{\oneZapep(\cdot,t)}\brac{\norm[\infty]{\Aone^\ep(\cdot,t)} + 1} + \norm[\infty]{\oneZapep(\cdot,t)}\norm[2]{\oneZapep\pap\Aone^\ep(\cdot,t)} \\
& \quad + \norm[2]{\Ztapep(\cdot,t)}\norm[\infty]{\oneZapep(\cdot,t)}\norm[\infty]{\frac{\Ztapepbar}{\Zapep}(\cdot,t)} + \norm[\infty]{\Ztep(\cdot,t)}\norm[2]{\pap\brac*[\bigg]{\frac{\Ztapepbar}{(\Zapep)^2}}(\cdot,t)} \\
& \quad \leq C(c_0,\Ecalone(0)) 
\end{align*}
As $\frac{\gep}{\Zapep}(\cdot,t)$ is the boundary value of the holomorphic function $\frac{\Gep}{\Psizpep}(\cdot,t)$, using \lemref{lem:unifconvPminusbar} we see that there is sequence $\ep_j = \ep$ such that as $\ep \to 0$, the functions $\frac{\Gep}{\Psizpep}(\cdot,t)$ converge on compact subsets of $\Pminusbar$ to a continuous and bounded function. But we already know that as $\ep\to 0$
\begin{align*}
\onePsizpep(\cdot,t) \Ra \onePsizp(\cdot,t) \qq \Gep(\cdot,t) \Ra \G(\cdot,t) \qq \tx{ on } \Pminus
\end{align*}
Hence $\frac{\G}{\Psizp}(\cdot,t)$ extends continuously to $\Pminusbar$ and 
\begin{align*}
\frac{\Gep}{\Psizpep}(\cdot,t)  \Ra \frac{\G}{\Psizp}(\cdot,t) \qq \tx{ on } \Pminusbar
\end{align*}
As $\Psizpep(\cdot,t) \Ra \Psizp(\cdot,t)$ on $\Pminusbar\backslash S(t)$, we see that $G(\cdot,t)$ extends continuously to $\Pminusbar\backslash S(t)$ and 
\begin{align*}
\Gep(\cdot,t) \Ra \G(\cdot,t) \qq \tx{ on } \Pminusbar\backslash S(t)
\end{align*}

\medskip
\noindent \textbf{Step 4}: We know form the proof of \thmref{thm:Wuexistence} that $\Zttep \Ra \Ztt$ on $\Rsp\times[0,T_0]$ and $\Ztt$ is a continuous and bounded function. From \eqref{form:Zttbar} we have
\begin{align*}
\Zttbarep(\ap,t) - i = -i\frac{\Aone^\ep}{\Zapep}(\ap,t)
\end{align*}
Now recall that $\abs{\Aone^\ep}(\ap,t) \leq C$ for all $\ap\in\Rsp$ and $t \in [0,T_0]$, where $C = C(\Ecalone(0)) > 0$. Hence we see that 
\begin{align*}
\abs{\Zttbarep(\ap,t) - i } \leq \frac{C}{\abs{\Psizpep}}(\ap,t)
\end{align*}
Letting $\ep \to 0$, we see that
\begin{align*}
\abs{\Zttbar(\ap,t) - i } \leq \frac{C}{\abs{\Psizp}}(\ap,t)
\end{align*}
Hence $\Zttbar(\ap,t) = i$ for all $\ap\in S(t)$. Now from \eqref{eq:gep} and \eqref{form:Zttbar} we have
\begin{align*}
\gep = -(\Zttbarep - i) + \Ztep\frac{\Ztapepbar}{\Zapep} - \frac{i}{\Zapep}
\end{align*}
As $G(\cdot,t)$ extends continuously to $\Pminusbar\backslash S(t)$, we define $g(\cdot,t): \Rsp \to \Csp$ as 
\begin{align*}
g(\ap,t) = 
\begin{cases}
\G(\ap,t) \qq &\tx{ if } \ap \in NS(t) \\
0  \qq &\tx{ if } \ap \in S(t)
\end{cases}
\end{align*}
As $NS(t)$ is a set of full measure and $G(\cdot,t)$ is a bounded holomorphic function, we see that 
\begin{align*}
G(\ap,t) = (K_\yp \conv g)(\ap,t) \qq \tx{ for all } (\ap,t) \in \Pminus\times[0,T_0] 
\end{align*}
Now we already know that $g(\cdot,t)$ is continuous on $NS(t)$ and $\gep(\cdot,t) \Ra \g(\cdot,t) $ on $NS(t)$. Hence using step 4 of the proof of \thmref{thm:mainangle}, by letting $\ep \to 0$ we obtain
\begin{align}\label{form:g}
g(\ap,t) = -(\Zttbar -i) + \Fbar(\ap,t)\brac{\onePsizp\Fzp}(\ap,t) - \frac{i}{\Psizp}(\ap,t) \qq \tx{ for all } \ap \in NS(t)
\end{align}
Now note that $\Fbar(\ap,t)$ is bounded and hence by using \thmref{thm:mainangle} we see that $g(\cdot,t)$ is also continuous on $S(t)$ with $g(\ap,t) = 0$ for all $\ap \in S(t)$. Hence $g(\cdot,t)$ is continuous on $\Rsp$ and hence $G(\cdot,t)$ extends continuously to $\Pminusbar$ with $G(\ap,t) = 0$ for all $\ap\in S(t)$. Now using the formula from \eqref{form:gradPfrak} namely
\begin{align*}
-\onePsizp(\pxp-i\pyp) \Pfrak =  \Fbar\frac{\Fzp}{\Psizp} - \frac{i}{\Psizp} - \G \quad \tx{ on } \Pminus\times[0,T_0]
\end{align*} 
we see that $\brac{\onePsizp(\pxp-i\pyp) \Pfrak}(\cdot,t)$ extends continuously to $\Pminusbar$ and $\brac{\onePsizp(\pxp-i\pyp) \Pfrak}(\ap,t) = 0$ for all $\ap \in S(t)$. In addition to this, observe from \eqref{form:g} and \eqref{form:gradPfrak} that we have
\begin{align*}
\Zttbar(\ap,t) - i = \brac{-\onePsizp(\pxp-i\pyp) \Pfrak}(\ap,t) \qq \tx{ for all } \ap \in NS(t)
\end{align*}
But as $\Zttbar(\ap,t) = i$ for all $\ap \in S(t)$ and $\brac{\onePsizp(\pxp-i\pyp) \Pfrak}(\ap,t) = 0$ for all $\ap \in S(t)$, we see that 
\begin{align*}
\Zttbar(\ap,t) - i = \brac{-\onePsizp(\pxp-i\pyp) \Pfrak}(\ap,t) \qq \tx{ for all } \ap \in \Rsp
\end{align*} 
\end{proof}

\begin{proof}[Proof of \corref{cor:gradP}]
The statements are all easily proven 
\begin{enumerate}
\item As $\vbold = \Fbar \compose \Psiinv$ and as $\Fbar(\cdot,t)$ extends continuously to $\Pminusbar$, we see that $\vbold(\cdot,t)$ extends continuously to $\Omegabar$.

\item  Observe that as $\vboldbar(\cdot,t)$ is holomorphic on $\Omega(t)$ and $\vboldbar = \F\compose \Psiinv$, we see that $\vboldbar_y = i\vboldbar_x$ and
\begin{align*}
\vboldbar_x = \pz\vboldbar = \pz(\F\compose\Psiinv) = \brac{\onePsizp\Fzp}\compose\Psiinv
\end{align*}
Hence by \thmref{thm:mainangle}, we see that $\vbold_x(\cdot,t)$ and $\vbold_y(\cdot,t)$ extend continuously to $\Omegabar(t)$ with $\vbold_x(\z,t) = \vbold_y(\z,t) = 0$ for all $\z \in \cbrac{\Z(\ap,t)\suchthat \ap \in S(t)} \subset \Sigma(t)$. 

\item We know that 
\begin{align*}
(\px -i\py) P = \brac{\frac{1}{\Psizp} (\pxp - i\pyp)\Pfrak}\compose\Psiinv
\end{align*}
Hence by \thmref{thm:gradP}, we see that $\grad P(\cdot,t)$  extends continuously to $\Omegabar(t)$ with $\grad P(\z,t) = 0$ for all $\z \in \cbrac{\Z(\ap,t)\suchthat \ap \in S(t)} \subset \Sigma(t)$. 

\item Observe that 
\begin{align*}
\vboldbar_t = (\F\compose\Psiinv)_t = \brac{\F_t - \Psi_t\frac{\Fzp}{\Psizp}}\compose\Psiinv
\end{align*}
As $\Fbar(\cdot,t)$ and $\frac{\Fzp}{\Psizp}(\cdot,t)$ extends continuously to $\Pminusbar$, from equation \eqref{eq:EulerRiem} we easily see that $\brac{\F_t - \Psi_t\frac{\Fzp}{\Psizp}}(\cdot,t)$ extends continuously to $\Pminusbar$ and hence $\vbold_t(\cdot,t)$ extends continuously to $\Omegabar(t)$.

\item As all the quantities in equation \eqref{eq:Euler} extend continuously to $\Omegabar(t)$, the Euler equation also holds on $\Omegabar(t)$. As $\grad P(\z,t) = 0$ at the singularities, we now see that $ \brac{\mathbf{v_t + (v.\nabla)v}}(\z,t) = -i $ for all  $\z \in \cbrac{\Z(\ap,t)\suchthat \ap \in S(t)} \subset \Sigma(t)$. 

\end{enumerate}

\end{proof}

\section{Examples: Angled crests and cusps}\label{sec:exampleE}

We now give a few examples of interfaces for which our theorems apply. In this section we assume that the interface approaches the real axis rapidly at infinity and construct domains for which
\begin{enumerate}
\item $\dis \sup_{\yp<0}\norm[\Linfty(\Rsp,\diff \xp)]{\frac{1}{\Psizp}(\xp+i\yp,0)} < \infty$
\item $\dis  \sup_{\yp<0}\norm[\Ltwo(\Rsp, \diff \xp)]{\partial_\zp \brac{\frac{1}{\Psizp}}(\xp+i\yp,0)} < \infty \quad $ and 
\item $\dis \sup_{\yp<0}\norm[\Ltwo(\Rsp, \diff \xp)]{\frac{1}{\Psizp}\pzp\brac{\frac{1}{\Psizp}\pzp \brac{\frac{1}{\Psizp}}}(\xp+i\yp,0)} < \infty$
\end{enumerate}
Observe that from these conditions we automatically get $ \sup_{\yp <0}\norm[\Ltwo(\Rsp,\diff \xp)]{\frac{1}{\Psizp(\xp+i\yp,0)} - 1} < \infty$. To control terms involving $\F$ we will assume that 
\begin{align*}
\sup_{\yp <0}\norm[H^3(\Rsp,\diff \xp)]{\F(\xp + i\yp,0)} < \infty
\end{align*}
As an example, $F$ being identically zero satisfies this assumption. It is easy to see that with these assumptions we have $\Ecalone(0)<\infty$ and $c_0<\infty$ and hence \thmref{thm:Wuexistence} applies. 

In this section we will use the following notation: Let $A \subset \Csp$ be a non-empty set and let $p \in \bar{A}$. Let $f,g:A \to \Csp$ be functions such that $g(z) \neq 0$ for all $z$ in a punctured neighborhood of $p$. We say that $f(z)\sim g(z)$  as  $z \to p$ in $A$, if $\lim_{\z \to p}\frac{f(z)}{g(z)} \in \Csp^*$ where $\Csp^* = \Csp\backslash\cbrac{0}$. In this section we will mostly have $A = \Pminus$ and $p = 0$. We will also let $\log(z)$ denote the principle branch of the logarithm.

\bigskip
\noindent \textbf{1. Smooth domains}
\medskip

Observe that if the boundary is of class $C^{1,\al}$ with $0<\al\leq 1$, then there exists constants $0<c_1,c_2 < \infty$ such that $c_1 \leq \Zapabs(\ap,0) \leq c_2$. Hence if in addition a domain satisfies $(\Zap - 1)(\cdot,0) \in H^2(\Rsp)$, then we easily see that $\Ecalone(0) < \infty$. In particular if the initial domain is smooth then  \thmref{thm:Wuexistence} applies. In this case the set of singularities $S(t)$ is empty. Note that in this case our results from \secref{sec:results} still apply but are not novel in any way.

\bigskip
\noindent \textbf{2. Angled crests}
\medskip

A regular smooth curve in the plane is a smooth mapping $\gamma:I\to \Csp$ such that $\gamma'(s) \neq 0$ for all $s\in I$ where $I$ is an interval. Consider a simply connected domain $\Omega \subset \Csp$ with $0\in \partial \Omega$ such that the boundary of $\Omega$ at $0$ consists of two regular smooth arcs such that the opening angle of $\Omega$ at $0$ is $\nu\pi$ i.e. there is a corner at $0\in \partial \Omega$. Assume that $0<\nu<1$. Then we have a local description of $\Psi$ near $z=0$.
\begin{thm}[\cite{Wi65}]\label{thm:Riemsing}
Let  $\Omega$ a domain as described above with $0\in \partial\Omega$. Let $\Psi:\Pminus \to \Omega$ be a Riemann map with $\Psi(0) = 0$ and let $n\geq 0 $ be an integer. Then 
\begin{align*}
\partial_z^n \Psi (z) \sim z^{\nu - n} \qq \tx{ as } \z \to 0 \tx{ in } \Pminus
\end{align*}
\end{thm}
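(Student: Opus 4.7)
The plan is to reduce the theorem to the Kellogg--Warschawski boundary regularity theorem by straightening the corner with an appropriate fractional power map. After a translation and rotation, we may assume $0 \in \partial \Omega$ and arrange so that a suitable branch of $w \mapsto w^{1/\nu}$ is well defined on $\Omega$ near $0$. The key observation is that this power map sends a corner of opening $\nu\pi$ at the origin to a corner of opening $\pi$: since each of the two bounding arcs emanates from $0$ as a regular smooth curve, their images under $w \mapsto w^{1/\nu}$ remain regular smooth curves (locally, away from $0$), and they now meet tangentially along a common tangent line. Thus the image domain has boundary of class $C^\infty$ at the origin.

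Next I would define $\phi(z) = \Psi(z)^{1/\nu}$, choosing the branch so that $\phi$ is a conformal map from a neighborhood of $0$ in $\Pminus$ into the straightened image domain, with $\phi(0) = 0$. By the Kellogg--Warschawski theorem on conformal maps at a smooth boundary point (see for instance \cite{Wi65}), $\phi$ extends to a $C^\infty$ map across the real axis near $0$, and crucially $\phi'(0) \neq 0$. This last fact is the content of the Schwarz reflection / boundary regularity argument: the straightened boundary admits reflection, and the conformal map is biholomorphic across it after reflection.

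The conclusion then follows by writing $\Psi(z) = \phi(z)^\nu$ and differentiating. Because $\phi(z) = \phi'(0) z + O(z^2)$ with $\phi'(0) \neq 0$, we have
\begin{align*}
\Psi(z) = (\phi'(0))^\nu z^\nu \brac{1 + O(z)}^\nu \sim z^\nu \qq \tx{ as } z \to 0 \tx{ in } \Pminus,
\end{align*}
and induction on $n$ using Fa\`a di Bruno (or simply differentiating the chain rule formula $\pz^n[\phi^\nu]$ repeatedly) yields $\partial_z^n \Psi(z) \sim z^{\nu-n}$, since every term in the expansion is bounded by a constant multiple of $z^{\nu-n}$, and the leading term $\nu(\nu-1)\cdots(\nu-n+1)(\phi'(0))^\nu z^{\nu-n}$ is nonzero because $0 < \nu < 1$ makes all the factors $\nu - k$ ($0 \leq k \leq n-1$) nonzero.

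The main obstacle is the Kellogg--Warschawski input: one needs the straightened domain to be smooth \emph{enough} at $0$ that $\phi$ has sufficient boundary regularity to justify the chain-rule expansions for all $n$. Since the original boundary arcs are regular smooth curves, the straightened arcs are $C^\infty$ with matching tangent directions at $0$, and the classical boundary regularity theory delivers the needed $C^\infty$ extension of $\phi$ with $\phi'(0) \neq 0$; modulo this classical fact, the remaining steps are routine manipulations of power series.
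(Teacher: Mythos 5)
The paper does not prove this statement; it is quoted without proof from Wigley \cite{Wi65}. Your proposal to straighten the corner by $w\mapsto w^{1/\nu}$ and then appeal to Kellogg--Warschawski has a genuine gap at the pivotal step: the straightened domain does \emph{not} have $C^\infty$ boundary at the origin. Parametrize a bounding arc of $\Omega$ as $\gamma(s)=a_1 s + a_2 s^2 + \cdots$ with $a_1\neq 0$; its image under the power map is $\gamma(s)^{1/\nu}=(a_1 s)^{1/\nu}\bigl(1+(a_2/a_1)s+\cdots\bigr)^{1/\nu}$, and after the natural reparametrization $u=(a_1 s)^{1/\nu}$ this becomes $u\bigl(1+c_1 u^{\nu}+c_2 u^{2\nu}+\cdots\bigr)$. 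This curve is $C^{1,\nu}$ at $u=0$ but not $C^{1,\alpha}$ for any $\alpha>\nu$, and certainly not $C^{2}$ (its curvature blows up like $u^{\nu-1}$), unless $1/\nu$ is an integer. Consequently Kellogg's theorem only gives $\phi\in C^{1,\nu}$ up to the boundary near $0$ with $\phi'(0)\neq 0$. That is enough to establish the cases $n=0$ and $n=1$, but the chain-rule / Fa\`a di Bruno induction does not close for higher $n$: the terms in $\pz^{n}[\phi^{\nu}]$ involving $\phi^{(k)}$ for $k\geq 2$ are not controlled by $C^{1,\nu}$ regularity of $\phi$ (interior estimates yield only $|\phi''(z)|\leq C\,\mathrm{dist}(z,\Rsp)^{\nu-1}$, which degenerates along tangential approaches to $0$).

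What the theorem actually requires, and what Wigley proves (building on Lehman's earlier treatment of analytic corners), is a full asymptotic expansion of $\Psi$ at the corner in the scale $z^{j+k\nu}$, with logarithmic corrections when $\nu$ is rational, obtained by iterating Schwarz reflection across the two boundary arcs and bootstrapping rather than by a single straightening. Your power map is the right first move, but on its own it produces an irremediable loss of regularity in the straightened boundary, so the ``modulo this classical fact'' in your last paragraph hides the entire difficulty of the result.
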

Hence we now consider an interface with $N\geq 1$ angled crests with angles $\nu_i\pi$ where $0<\nu_i<\half$. Near an angled crest, if we change coordinates so that it is at the origin, then by the above result we see that as $z \to 0$ in $\Pminus$ we have
\begin{enumerate}
\item $\dis \frac{1}{\Psizp}(z) \sim z^{1-\nu}$ 

\item $\dis \partial_\zp \brac{\frac{1}{\Psizp}}(z) \sim z^{-\nu}$ 

\item $\dis \frac{1}{\Psizp}\pzp\brac{\frac{1}{\Psizp}\pzp \brac{\frac{1}{\Psizp}}}(z) \sim z^{1-3\nu}$
\end{enumerate}
Hence the energy $\Ecalone(0) < \infty$ and \thmref{thm:Wuexistence} applies. This argument that the energy allows angled crests was also given in \cite{KiWu18}. Note that all the results in \secref{sec:results} apply to such interfaces.

\bigskip
\noindent \textbf{3. Cusps}
\medskip

A regular analytic curve in the plane is an analytic mapping $\gamma:I\to \Csp$ such that $\gamma'(s) \neq 0$ for all $s\in I$ where $I$ is an interval. Consider a domain $\Omega$ with $0\in \partial \Omega$ such that the boundary of $\Omega$ at $0$ consists of two regular analytic arcs such that the opening angle of $\Omega$ at $0$ vanishes i.e. there is a cusp at $0\in \partial \Omega$. By an analytic change of coordinates near $0$ we can assume that the boundary of $\Omega$ near $0$ consists of two arcs one of which is the positive real axis. Assume that there exists an $R>0$ such that
\begin{align}\label{form:Omega}
\Omega\cap \Bbar(0,R) = \cbrac{z\in\Csp \suchthat \abs{z} \leq R, 0<arg(z) < \thvar(\abs{z})}
\end{align}
where $\thvar(s)$ is a real power series that converges on $(-2R,2R) $ and is positive on $(0,R)$.  Then we have the following description of the Riemann mapping near $0$.

\begin{thm}[\cite{KaLe16}]\label{thm:cusp}
Let  $\Omega$ a domain as described above with $0\in \partial\Omega$. Let $\thvar(s) = \sum_{j=1}^{\infty} a_j s^j$
be the power series of $\thvar(s)$ around $0$ with $a_1 \neq 0$ . Let $\Psi:\Pminus \to \Omega$ be a Riemann map with $\Psi(0) = 0$ and let $n\geq 1$ be an integer. Then 
\begin{align*}
\Psi (z) &\sim \frac{1}{\log(\abs{z})} \qq \tx{ as } \z \to 0 \tx{ in } \Pminus  \\
\partial_z^n \Psi (z) &\sim \frac{z^{- n}}{\log(z)^2}  \qquad \tx{ as } \z \to 0 \tx{ in } \Pminus 
\end{align*}
\end{thm}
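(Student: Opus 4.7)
The plan is to reduce to an explicit model cusp by a local analytic straightening, to analyze the model with an elementary logarithmic conformal map, and then to transfer the asymptotics back to the original Riemann map via a comparison argument combined with the chain rule.

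First, since one of the two analytic boundary arcs at $0$ is the positive real axis, Schwarz reflection across this axis produces a conformal map defined on a full neighborhood of $0\in\Csp$ that straightens the other analytic arc into a curve of the form $\arg w=\theta_0(\abs{w})$ with $\theta_0(s)=a_1 s+O(s^2)$ still real analytic at $0$. Postcomposing $\Psi$ with this map reduces the problem to the case in which $\Omega$ agrees, near $0$, with the canonical cusp $\cbrac{w\in\Csp\suchthat 0<\arg w<\theta_0(\abs{w}),\, \abs{w}<r_0}$. One then takes as model $\Psi_0(z)=-c/\log(-z)$ with $c=\pi/a_1$ and the principal branch of $\log$, so that $\log(-z)$ is holomorphic on $\Pminus$. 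A direct polar computation with $-z=re^{i\psi}$, $\psi\in(0,\pi)$, gives $\Psi_0(z)=c(-\log r+i\psi)/((\log r)^2+\psi^2)$, from which one reads off both $\Psi_0(z)\sim 1/\log\abs{z}$ as $z\to 0$ in $\Pminus$ and that the image is a cusp whose opening function $\theta_{\mathrm{mod}}(s)=a_1 s+O(s^2)$ matches $\theta_0$ to leading order.

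With the model in hand, set $\Phi=\Psi_0^{-1}\circ\Psi$: a conformal map between two cusp regions whose openings agree through first order. A boundary-regularity argument (Schwarz reflection across the common straight side, together with the matching analytic Taylor expansions of the curved sides) shows that $\Phi$ extends analytically across the vertex with $\Phi(0)=0$ and $\Phi'(0)\neq 0$. Thus $\Psi=\Psi_0\circ\Phi$ inherits $\Psi(z)\sim\Psi_0(z)\sim 1/\log\abs{z}$, proving the first asymptotic. For the derivative statement, differentiating the identity $\Psi=\Psi_0\circ\Phi$ and using $\Phi'(0)\neq 0$ reduces $\pz^n\Psi$ to $\pz^n\Psi_0$, and a simple induction starting from $\pz(1/\log z)=-1/(z(\log z)^2)$ yields $\pz^n(1/\log z)\sim z^{-n}/(\log z)^2$ as $z\to 0$ in $\Pminus$.

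The principal obstacle is the comparison step. In the nonvanishing-opening (corner) case the analogous claim is given by \thmref{thm:Riemsing}, but at a cusp the opening vanishes and the target degenerates, so one must actively rule out singular behavior of $\Phi$ at $0$. This typically requires modulus-of-annulus estimates comparing thin cuspidal collars of $0$ in the two regions, or a quasiconformal extension of $\Phi$ across $0$ followed by the measurable Riemann mapping theorem, and constitutes the technical core of the argument in \cite{KaLe16}.
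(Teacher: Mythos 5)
The paper does not prove \thmref{thm:cusp}; it is quoted verbatim as an external result from \cite{KaLe16}, so there is no in-paper argument against which to check your proposal. What you have written is therefore best assessed on its own merits as a sketch of the cited result.

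As a scaffold your plan is the standard one (reduce to a model cusp, analyze the explicit logarithmic map $\Psi_0(z)=-c/\log(-z)$, and then compare $\Psi$ to $\Psi_0$), and your elementary computations for $\Psi_0$ --- the polar expression, $\Psi_0(z)\log\abs{z}\to -c$, and the inductive asymptotic $\pz^n\Psi_0(z)\sim z^{-n}/(\log z)^2$ --- are correct. However, the load-bearing step is precisely the one you concede you have not carried out. The claim that $\Phi=\Psi_0^{-1}\circ\Psi$ ``extends analytically across the vertex with $\Phi(0)=0$ and $\Phi'(0)\neq 0$'' is not established by the reasons you give. Schwarz reflection across the common straight side extends $\Phi$ analytically to a punctured neighborhood of $0$; it says nothing at the vertex itself, and neither does the observation that the two curved arcs have matching first-order Taylor data --- since they need not coincide, $\Phi$ does not even preserve the curved boundary, so reflection across it is unavailable. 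Moreover, in the corner case the analogue of this extension is exactly the content of \thmref{thm:Riemsing}, and in the cusp case the degenerate opening makes the corresponding boundary-regularity statement strictly harder, not easier. There is also a small well-definedness issue: $\Psi_0$ is a local model, its image need not contain $\Psi$ of a fixed neighborhood, so $\Psi_0^{-1}\circ\Psi$ has to be defined via a localization or a domain-comparison argument, which again is part of the omitted core. In short, your proposal is a reasonable outline of the strategy used in \cite{KaLe16}, but it does not constitute a proof: the modulus/quasiconformal comparison you flag as the ``technical core'' is exactly the missing content, and the remarks you offer in its place (reflection plus Taylor matching) would not fill that gap.
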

Hence we now consider an interface with an analytic cusp as described above. Using the result above we see that as $z \to 0$ in $\Pminus$ we have
\begin{enumerate}
\item $\dis \frac{1}{\Psizp}(z) \sim z\log(z)^2$ 

\item $\dis \partial_\zp \brac{\frac{1}{\Psizp}}(z) \sim \log(z)^2$ 

\item $\dis \frac{1}{\Psizp}\pzp\brac{\frac{1}{\Psizp}\pzp \brac{\frac{1}{\Psizp}}}(z) \sim z\log(z)^6$ 
\end{enumerate}
Hence the energy $\Ecalone(0) < \infty$ and hence cusps are allowed in \thmref{thm:Wuexistence}. Note that all the results in \secref{sec:results} apply even in this case. In particular interfaces with cusps at $t=0$, have cusps for $t>0$.

It is important to note that an interface with a cusp is not chord arc and hence the second part of   \thmref{thm:Wuexistence} does not apply. So it seems that it may be possible that an interface with a cusp at $t=0$ may self intersect instantaneously. We now show that this does not happen. For simplicity let us assume that there is a single cusp at $\ap =0$ at $t=0$ of the form as described above. Hence from \eqref{form:Omega} and \thmref{thm:cusp} we see that there exists a $\ep>0$ such that 
\begin{align*}
 \abs{\apone - \aptwo}^\half \leq \abs{\Psi(\apone) - \Psi(\aptwo)} \qq \tx{ for all } \apone,\aptwo \in (-\ep,\ep)
\end{align*}
The following lemma shows that for such interface interfaces, the interface does not self intersect for a short period of time. For simplicity we only state the lemma for a single cusp but it can be easily be generalized to the case of multiple cusps.

\begin{lem}
Let $(\F,\Psi,\Pfrak)$ be a solution in $[0,T_0]$ as given by \thmref{thm:Wuexistence}. Also assume that the initial interface $\Z(\cdot,0)$ is an absolutely continuous non-self intersecting curve and that there exists an $\ep>0$ such that
\begin{enumerate}

\item There exists a $0<\delta<1$ such that for all $\ap < \bp$ with $(\ap,\bp) \in \Rsp^2\backslash (-\ep,\ep)\times(-\ep,\ep) $, we have
\begin{align*}
\delta\int_\ap^\bp \abs{\Zap(\gamma,0)}\diff\gamma \leq \abs{\Z(\ap,0) - \Z(\bp,0)} \leq \int_\ap^\bp \abs{\Zap(\gamma,0)}\diff\gamma 
\end{align*}

\item  $\abs{\ap - \bp}^\half \leq \abs{\Z(\ap,0) - \Z(\bp,0)} \qq \tx{ for all } \ap,\bp \in (-\ep,\ep)$

\end{enumerate}
Then there is $T_1>0$ with $T_1 = C(\Ecalone(0))$, such that on $[0,\min\cbrac*[\big]{T_0,\frac{\delta}{T_1}}]$,  the interface $Z = Z(\cdot,t)$ is an absolutely continuous non-self intersecting curve.
\end{lem}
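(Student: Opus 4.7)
The plan is to work in the Lagrangian parametrization $\z(\cdot,t)$, in which $h(\al,0)=\al$ so the initial Lagrangian and conformal parameters coincide, and to prove that $\z(\al_1,t)\neq\z(\al_2,t)$ for all $\al_1\neq\al_2$ on the asserted time interval. Absolute continuity of $\Z(\cdot,t)$ will then follow automatically, since the estimates in the proof of \thmref{thm:Wuexistence} yield $\abs{\zal(\al,t)}\leq c_2\abs{\zal(\al,0)}$, the right-hand side belongs to $\Lone_{loc}$, and $\hinv(\cdot,t)$ is Lipschitz. The strategy is to split $\Rsp^2$ into two regimes according to whether the particles are near or away from the cusp, and to run a chord-arc preservation argument in the first, while the second requires a different ingredient.

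In the first regime, $(\al_1,\al_2)\in\Rsp^2\setminus (-\ep,\ep)^2$, the chord-arc hypothesis (1) applies and I would reproduce the argument from the second half of \thmref{thm:Wuexistence}. Combining $\norm[\infty]{\ztal/\zal}\leq C(\Ecalone(0))$ and $\abs{\zal(\gamma,s)}\leq c_2\abs{\zal(\gamma,0)}$ from \eqref{ineq:quantE} and the subsequent displayed estimates yields
\begin{align*}
\abs{\zt(\al_1,s)-\zt(\al_2,s)} \leq Cc_2\int_{\al_1}^{\al_2}\abs{\zal(\gamma,0)}\diff\gamma,
\end{align*}
so integrating $\pt\abs{\z(\al_1,t)-\z(\al_2,t)}\geq -\abs{\zt(\al_1,t)-\zt(\al_2,t)}$ in time and invoking (1) gives $\abs{\z(\al_1,t)-\z(\al_2,t)}\geq (\delta-T_1't)\int_{\al_1}^{\al_2}\abs{\zal(\gamma,0)}\diff\gamma$ for some $T_1'=T_1'(\Ecalone(0))$, which is strictly positive for $t<\delta/T_1'$.

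The second regime, $\al_1,\al_2\in(-\ep,\ep)$, is the main obstacle, since the initial interface is not chord-arc near the cusp, and a direct bound $\abs{\zt(\al_1,s)-\zt(\al_2,s)}\leq 2\norm[\infty]{\zt}$ would force the lower bound to become negative. The key observation is that the uniform bound $\norm[2]{\Ztap^\ep}\leq C(\Ecalone(0))$ from \eqref{ineq:quantE} persists in the limit, so $\Zt(\cdot,s)$ is uniformly $1/2$-H\"older continuous:
\begin{align*}
\abs{\Zt(\ap_1,s)-\Zt(\ap_2,s)} \leq C(\Ecalone(0))\abs{\ap_1-\ap_2}^{1/2}.
\end{align*}
Transferring to Lagrangian via the bi-Lipschitz bound $\abs{h(\al_1,s)-h(\al_2,s)}\leq c_2\abs{\al_1-\al_2}$ from part (a) of \thmref{thm:Wuexistence} yields $\abs{\zt(\al_1,s)-\zt(\al_2,s)}\leq C\sqrt{c_2}\abs{\al_1-\al_2}^{1/2}$, and integrating together with hypothesis (2) produces
\begin{align*}
\abs{\z(\al_1,t)-\z(\al_2,t)}\geq (1-C_* t)\abs{\al_1-\al_2}^{1/2}
\end{align*}
for a constant $C_*=C_*(\Ecalone(0))$. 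The crucial point is that the $1/2$-H\"older regularity of $\Zt$ matches exactly the $\abs{\al_1-\al_2}^{1/2}$ lower bound assumed in (2), compensating for the absence of chord-arc.

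Choosing $T_1=\max(T_1',C_*)$, the inequality $\delta<1$ yields $\delta/T_1\leq \min(\delta/T_1',1/C_*)$, so combining the two regimes proves non-self-intersection of $\z(\cdot,t)$, and hence of $\Z(\cdot,t)$, on $[0,\min\{T_0,\delta/T_1\}]$. For full rigor the estimates would be carried out first at the level of the $\ep$-regularized smooth solutions $\z^\ep$ and then passed to the limit $\ep\to 0$ using the uniform convergence statements (a)--(e) from the proof of \thmref{thm:Wuexistence}; the generalization to several cusps is straightforward by enlarging the excluded set in (1) to a finite union of small intervals.
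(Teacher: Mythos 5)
Your proposal is correct and follows essentially the same route as the paper: outside $(-\ep,\ep)^2$ you run the chord-arc preservation argument from \thmref{thm:Wuexistence}, and inside $(-\ep,\ep)^2$ you exploit the uniform $L^2$ bound on the derivative of the velocity to obtain the $1/2$-H\"older bound $\abs{\zt(\al_1,s)-\zt(\al_2,s)}\lesssim\abs{\al_1-\al_2}^{1/2}$, which is exactly what the paper does by Cauchy--Schwarz on $\int_{\al_1}^{\al_2}\ztalep$, just phrased in Lagrangian rather than conformal coordinates. Your remark that everything should be established first for the $\ep$-regularized solutions and then passed to the limit matches the paper's treatment, including of absolute continuity.
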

\begin{proof}
We only need to slightly modify the proof of \thmref{thm:Wuexistence} to prove this lemma. By following the proof of \thmref{thm:Wuexistence}, we see that for all $t\in [0,T_0] $ and for all $\al,\be\in \Rsp$ we have
\begin{align}\label{eq:chordarcestimate}
\abs{\abs{\z(\al,t) - \z(\be,t)} - \abs{\Z(\al,0) - \Z(\be,0)}} \leq t C(\Ecalone(0))\int_\al^\be \abs{\Zap(\gamma,0)}\diff\gamma
\end{align}
which in particular implies
\begin{align*}
\abs{\z(\al,t) - \z(\be,t)}  \leq (t C(\Ecalone(0)) + 1)\int_\al^\be \abs{\Zap(\gamma,0)}\diff\gamma
\end{align*}
which immediately implies that $\z(\cdot,t)$ is absolutely continuous for all $t \in [0,T_0]$ and hence so is $Z(\cdot,t)$. From  \eqref{eq:chordarcestimate} we see that for all $0\leq t \leq \min\cbrac{T_0, \frac{\delta}{2C(\Ecalone(t))}}$ and for all  $\al<\be$  with $(\al,\be) \in \Rsp^2\backslash (-\ep,\ep)\times(-\ep,\ep) $ we have
\begin{align}\label{}
\half\delta\int_\al^\be \abs{\Zap(\gamma,0)}\diff \gamma \leq \abs{\z(\al,t) - \z(\be,t)} \leq 2\int_\al^\be \abs{\Zap(\gamma,0)}\diff \gamma 
\end{align}
Hence we now only need to show that $\z(\al,t)$ and $\z(\be,t)$ do not intersect if $\al,\be \in (-\ep,\ep)$.  We see that
\begin{align*}
\zep(\al,t) - \zep(\be,t) = \zep(\al,0) - \zep(\be,0) + \int_0^t \int_\al^\be \ztalep(\gamma,s)\diff\gamma\diff s
\end{align*}
Observe that for all $t \in [0,T_0] $ we have $\norm[2]{\ztalep(\cdot,t)} \leq C(\Ecalone(0))$. Hence we have
\begin{align*}
\abs{\abs{\zep(\al,t) - \zep(\be,t)} - \abs{\Zep(\al,0) - \Zep(\be,0)}} \leq t C(\Ecalone(0))\abs{\al-\be}^\half
\end{align*}
Now taking the limit as $\ep \to 0$ we obtain
\begin{align}\label{eq:cuspestimate}
\abs{\abs{\z(\al,t) - \z(\be,t)} - \abs{\Z(\al,0) - \Z(\be,0)}} \leq t C(\Ecalone(0))\abs{\al-\be}^\half
\end{align}
Hence we see that for all $0\leq t \leq \min\cbrac{T_0, \frac{\delta}{2C(\Ecalone(t))}}$ we have
\begin{align*}
\half\abs{\al - \be}^\half \leq \abs{\z(\al,t) - \z(\be,t)} \qq \tx{ for all } \al,\be \in (-\ep,\ep)
\end{align*}
From this we see that $\z(\cdot,t)$ is non-self intersecting for all $0\leq t \leq \min\cbrac{T_0, \frac{\delta}{2C(\Ecalone(t))}}$ and hence so is $\Z(\cdot,t)$.
\end{proof}

\medskip

\bibliographystyle{amsplain}
\bibliography{../../../Writing/Main.bib}

\end{document}